\newtheorem{thm}{Theorem}[section]
\newtheorem{prop}[thm]{Proposition}
\newtheorem{lemma}[thm]{Lemma}
\newtheorem{cor}[thm]{Corollary}
\newtheorem{remark}[thm]{Remark}
\newtheorem{problem}[thm]{Problem}
\numberwithin{equation}{section}
\def\bR{\mathbb{R}}
\def\bN{\mathbb{N}}
\def\bM{\mathbb{M}}
\def\Tr{\mathrm{Tr}\,}
\def\diag{\mathrm{diag}}
\def\BS{\mathrm{BS}}
\begin{document}
\baselineskip=16pt
\allowdisplaybreaks

\centerline{\LARGE Log-majorization related to R\'enyi divergences}
\bigskip
\bigskip
\centerline{\Large Fumio Hiai\footnote{{\it E-mail address:} hiai.fumio@gmail.com}}

\medskip
\begin{center}
$^1$\,Tohoku University (Emeritus), \\
Hakusan 3-8-16-303, Abiko 270-1154, Japan
\end{center}

\medskip
\begin{abstract}
For $\alpha,z>0$ with $\alpha\ne1$, motivated by comparison between different kinds of R\'enyi
divergences in quantum information, we consider log-majorization between the matrix functions
\begin{align*}
P_\alpha(A,B)&:=B^{1/2}(B^{-1/2}AB^{-1/2})^\alpha B^{1/2}, \\
Q_{\alpha,z}(A,B)&:=(B^{1-\alpha\over2z}A^{\alpha\over z}B^{1-\alpha\over2z})^z
\end{align*}
of two positive (semi)definite matrices $A,B$. We precisely determine the parameter $\alpha,z$
for which $P_\alpha(A,B)\prec_{\log}Q_{\alpha,z}(A,B)$ and $Q_{\alpha,z}(A,B)\prec_{\log}
P_\alpha(A,B)$ holds, respectively.

\bigskip\noindent
{\it 2010 Mathematics Subject Classification:}
15A42, 15A45, 47A64

\medskip\noindent
{\it Key words and phrases:}
Positive definite matrix, Log-majorization, R\'enyi divergence, Operator mean, Operator
perspective, Weighted geometric mean, Unitarily invariant norm
\end{abstract}

\section{Introduction}

For each $n\in\bN$ we write $\bM_n$ for the $n\times n$ complex matrices and $\bM_n^+$ for the
positive semidefinite $n\times n$ matrices. We write $B>0$ if $B\in\bM_n$ is positive definite.

Recall that for $X,Y\in\bM_n^+$, the \emph{log-majorization} $X\prec_{\log}Y$ means that
$$
\prod_{i=1}^k\lambda_i(X)\le\prod_{i=1}^k\lambda_i(Y),\qquad k=1,\dots,n
$$
with equality for $k=n$ (i.e., $\det X=\det Y$), where $\lambda_1(X)\ge\dots\ge\lambda_n(X)$
are the eigenvalues of $X$ in decreasing order counting multiplicities. As is well-known,
$X\prec_{\log}Y$ implies the \emph{weak majorization} $X\prec_wY$, i.e.,
$\sum_{i=1}^k\lambda_i(X)\le\sum_{i=1}^k\lambda_i(Y)$ for $k=1,\dots,n$. The latter is
equivalent to that $\|X\|\le\|Y\|$ holds for every unitarily invariant norm $\|\cdot\|$.
See, e.g., \cite{An,Bh,Hi2,MOA} for generalities on majorization theory for matrices. 

Let $\alpha,z>0$ with $\alpha\ne1$, and let $A,B\in\bM_n^+$ with $B>0$. In the present paper
we are concerned with the following two-variable matrix functions
\begin{align*}
P_\alpha(A,B)&:=B^{1/2}(B^{-1/2}AB^{-1/2})^\alpha B^{1/2}, \\
Q_{\alpha,z}(A,B)&:=(B^{1-\alpha\over2z}A^{\alpha\over z}B^{1-\alpha\over2z})^z.
\end{align*}
Two special versions of $Q_{\alpha,z}$ are
\begin{align*}
Q_\alpha(A,B)&:=Q_{\alpha,1}(A,B)=B^{1-\alpha\over2}A^\alpha B^{1-\alpha\over2}, \\
\widetilde Q_\alpha(A,B)&:=Q_{\alpha,\alpha}(A,B)
=(B^{1-\alpha\over2\alpha}AB^{1-\alpha\over2\alpha})^\alpha.
\end{align*}
Our motivation to consider these functions comes from different types of R\'enyi divergences
that have recently been developed in quantum information. The conventional (or standard)
\emph{$\alpha$-R\'enyi divergence} (due to Petz \cite{Pe}) is
$$
D_\alpha(A\|B):={1\over\alpha-1}\log{\Tr Q_\alpha(A,B)\over\Tr A},
$$
the \emph{sandwiched $\alpha$-R\'enyi divergence} \cite{MDSFT,WWY} is
$$
\widetilde D_\alpha(A\|B):={1\over\alpha-1}\log{\Tr\widetilde Q_\alpha(A,B)\over\Tr A},
$$
and the so-called \emph{$\alpha$-$z$-R\'enyi divergence} \cite{AD} is
$$
D_{\alpha,z}(A\|B):={1\over\alpha-1}\log{\Tr Q_{\alpha,z}(A,B)\over\Tr A}.
$$
In addition to $D_\alpha$ and $\widetilde D_\alpha$ we define the \emph{maximal
$\alpha$-R\'enyi divergence}
$$
\widehat D_\alpha(A\|B):={1\over\alpha-1}\log{\Tr P_\alpha(A,B)\over\Tr A}.
$$
(For the term ``maximal" here, see Remark \ref{R-8.4}.) See \cite{HM} and references therein
for more background information on quantum divergences.

We note that $P_\alpha(A,B)$ is a special case of \emph{operator perspective} defined
associated with a function $f$ on $(0,\infty)$ by
$$
P_f(A,B):=B^{1/2}f(B^{-1/2}AB^{-1/2})B^{1/2},\qquad A,B\in\bM_n^+,\ A,B>0,
$$
which was studied by Effros and Hansen \cite{EH} and others, with applications to quantum
information. Furthermore, note that when $f$ is a non-negative operator monotone function on
$(0,\infty)$ with $f(1)=1$, $P_f(A,B)$ is nothing but the \emph{operator mean}
$B\,\sigma_f\,A$ associated with $f$ in the Kubo-Ando sense \cite{KA}. In particular, when
$0<\alpha<1$, $P_\alpha(A,B)=B\,\#_\alpha\,A$, the \emph{weighted geometric mean} (first
introduced by Pusz and Woronowicz \cite{PW} in the case $\alpha=1/2$).

The inequality $\widetilde D_\alpha(A\|B)\le D_\alpha(A\|B)$ is well-known as a consequence
of Araki's log-majorization \cite{Ar} (see also \cite{AH}); indeed, $Q_{\alpha,z}(A,B)$ is
monotone decreasing in $z>0$ in the log-majorization order. However, the comparison between
$\widehat D_\alpha$ and $D_{\alpha,z}$ (in particular, $D_\alpha$) has not fully been
investigated so far, which motivate us to consider the log-majorization between $P_\alpha$
and $Q_{\alpha,z}$. In this paper we present the following theorem which was announced without
proofs in \cite[Remark 4.6]{HM}:

\begin{thm}\label{T-1.1}
Let $A,B\in\bM_n^+$ with $B>0$.
\begin{itemize}
\item[\rm(1)] For $0<\alpha<1$ and $z>0$, $P_\alpha(A,B)\prec_{\log}Q_{\alpha,z}(A,B)$.
\item[\rm(2)] For $\alpha>1$ and $0<z\le\min\{\alpha/2,\alpha-1\}$,
$P_\alpha(A,B)\prec_{\log}Q_{\alpha,z}(A,B)$.
\item[\rm(3)] For $\alpha>1$ and $z\ge\max\{\alpha/2,\alpha-1\}$,
$Q_{\alpha,z}(A,B)\prec_{\log}P_\alpha(A,B)$.
\end{itemize}
In particular, $P_\alpha(A,B)\prec_{\log}Q_\alpha(A,B)$ if $0<\alpha<1$ or $\alpha\ge2$, and
$Q_\alpha(A,B)\prec_{\log}P_\alpha(A,B)$ if $1<\alpha\le2$.
\end{thm}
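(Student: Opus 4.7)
The plan rests on three ingredients: (a) the log-majorization monotonicity $Q_{\alpha,z_2}\prec_{\log}Q_{\alpha,z_1}$ for $z_1<z_2$, a consequence of Araki's log-majorization already noted after the definitions in the introduction; (b) the symmetric Lie-Trotter limit
\[
\lim_{z\to\infty}Q_{\alpha,z}(A,B)=\exp\bigl(\alpha\log A+(1-\alpha)\log B\bigr)=:G_\alpha(A,B),
\]
obtained by expanding $B^{(1-\alpha)/(2z)}$ and $A^{\alpha/z}$ in powers of $1/z$; and (c) the Ando-Hiai log-majorization $X^r\#_\alpha Y^r\prec_{\log}(X\#_\alpha Y)^r$ for $0\le\alpha\le1$ and $r\ge1$, equivalently $X\#_\alpha Y\prec_{\log}(X^s\#_\alpha Y^s)^{1/s}$ for $0<s\le1$.

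For part~(1), (a) and (b) together give $G_\alpha(A,B)\prec_{\log}Q_{\alpha,z}(A,B)$ for every $z>0$. Applying (c) with $X=B$, $Y=A$ and letting $s\to0^+$, the expression $(B^s\#_\alpha A^s)^{1/s}$ converges to $e^{(1-\alpha)\log B+\alpha\log A}=G_\alpha(A,B)$, so $P_\alpha(A,B)=B\#_\alpha A\prec_{\log}G_\alpha(A,B)$. Chaining this with the first inequality yields~(1).

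For parts~(2) and~(3), the monotonicity (a) reduces the claim to the extremal value $z_0=\min\{\alpha/2,\alpha-1\}$ in part~(2) and $z_0=\max\{\alpha/2,\alpha-1\}$ in part~(3). One thus obtains four critical comparisons between $P_\alpha$ and either $Q_{\alpha,\alpha-1}$ or $Q_{\alpha,\alpha/2}$, split according to whether $1<\alpha\le2$ or $\alpha\ge2$. At the pivot $\alpha=2$ both critical $z_0$-values collapse to $1$, and the polar-decomposition identity $\lambda(AB^{-1}A)=\lambda(B^{-1/2}A^2B^{-1/2})$ gives $\lambda(P_2)=\lambda(Q_{2,1})$---precisely where the direction of log-majorization flips. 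For a general $\alpha>1$ I would reduce each log-majorization to the corresponding operator-norm inequality via antisymmetric tensor powers (both $P_\alpha$ and $Q_{\alpha,z}$ commute with $\wedge^k$, since they are built from $A,B$ by functional calculus and matrix multiplication), and then establish the norm bound by a judicious use of Araki's log-majorization $(B^{1/2}YB^{1/2})^p\prec_{\log}B^{p/2}Y^pB^{p/2}$ for $p\ge1$, with carefully chosen $Y$ (typically $Y=B^{-1/2}AB^{-1/2}$) and exponent $p$ (of the order of $\alpha$, $\alpha-1$, or $\alpha/(\alpha-1)$).

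The hardest step will be this critical-case analysis for $\alpha>1$: part~(1) is a clean chain of well-known inequalities, whereas matching the precise thresholds $\alpha/2$ and $\alpha-1$ demands delicate bookkeeping of exponents across several interlocking applications of Araki's inequality and a careful reduction to operator norms. Once (1)--(3) are in hand, the concluding ``in particular'' statement follows by setting $z=1$ in each and checking that $z=1$ lies in the permitted range: any $z>0$ when $0<\alpha<1$; $\min\{\alpha/2,\alpha-1\}\ge1$ when $\alpha\ge2$; and $\max\{\alpha/2,\alpha-1\}\le1$ when $1<\alpha\le2$.
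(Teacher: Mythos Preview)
Your treatment of part~(1) is correct and coincides with the paper's: both chain Ando--Hiai through the Lie--Trotter limit $G_\alpha$ and then through Araki's monotonicity in~$z$.

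For parts~(2) and~(3) your framework is also the paper's---reduce in~$z$ to the extremal value, pass to operator norms via antisymmetric tensor powers, and use homogeneity to convert the norm bound into an implication of the form ``$Q_{\alpha,z}\le I\implies P_\alpha\le I$'' (or the reverse). The gap is in the last step. You propose to close it by ``a judicious use of Araki's log-majorization,'' but Araki's inequality yields only log-majorization (equivalently, operator-\emph{norm} bounds), not operator inequalities; it cannot by itself drive an implication between matrix orderings. The paper instead rewrites the hypothesis $Q_{\alpha,z}\le I$ as $A^{\alpha/z}\le B^{(\alpha-1)/z}$ and then repeatedly applies the L\"owner--Heinz inequality (operator monotonicity of $x\mapsto x^t$ for $0\le t\le1$, and of $x\mapsto x^{-t}$) to push this inequality through the expression for $P_\alpha$. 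For $1<\alpha\le2$ in part~(2) and for all of part~(3) this is a two- or three-line calculation once one uses the identity $P_\alpha(A,B)=A^{1/2}(A^{1/2}B^{-1}A^{1/2})^{\alpha-1}A^{1/2}$, and the exponents $\alpha-1$, $z/(\alpha-1)$, $\alpha/z-1$, $1-(\alpha-1)/z$ that arise are exactly what force the thresholds $\alpha-1$ and $\alpha/2$.

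The genuinely hard case, which your outline does not anticipate, is $\alpha\ge2$ in part~(2). Here a single application of L\"owner--Heinz is not enough: the paper writes $\alpha=2m+\lambda$ (or $2m+2-\lambda$) with $m\in\bN$ and $\lambda\in[0,1)$, expands $(B^{-1/2}AB^{-1/2})^\alpha$ as $(B^{-1/2}AB^{-1/2})^m(B^{-1/2}AB^{-1/2})^\lambda(B^{-1/2}AB^{-1/2})^m$, and then \emph{iterates} L\"owner--Heinz $m$ times, alternately bounding powers of $A$ above by powers of $B$ and powers of $B^{-1}$ above by powers of $A^{-1}$, peeling off one factor $AB^{-1}$ from each side at every step. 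This inductive peeling is the heart of the proof for large~$\alpha$, and it does not reduce to a single invocation of Araki's inequality with any choice of~$Y$ and~$p$.
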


The paper is organized as follows. In Sections 2 and 3 we prove Theorem \ref{T-1.1}. In
Section 4 we give an example showing that Theorem \ref{T-1.1} is best possible with regard to
the assumptions on the parameters $\alpha,z$, so that Theorem \ref{T-1.1} is completed into
Theorem \ref{T-4.1}. In Section 5 we present the necessary and sufficient conditions on
$\alpha,r,z$ for which $P_{\alpha,r}(A,B)\prec_{\log}Q_{\alpha,z}(A,B)$ and
$Q_{\alpha,z}(A,B)\prec_{\log}P_{\alpha,r}(A,B)$ hold, respectively, where
$P_{\alpha,r}(A,B):=P_\alpha(A^{1/r},B^{1/r})^r$. Moreover, we give a log-majorization
for $P_\alpha$ for $\alpha\ge2$, supplementing Ando-Hiai's log-majorization \cite{AH} for
$P_\alpha$ for $0<\alpha<1$ and its complementary version recently obtained by Kian and Seo
\cite{KS} for $P_\alpha$ for $1<\alpha\le2$. (Note that the negative power $\beta\in[-1,0)$
case in \cite{KS} can be rephrased into the case of $P_\alpha$ for $\alpha=1-\beta\in(1,2]$,
see Section 5.) Applying our log-majorization results, in Sections 6 and 7 we give norm
inequalities for unitarily invariant norms and logarithmic trace inequalities. The norm
inequalities here improve those given in \cite{KS} and the logarithmic trace inequalities here
supplement those given in \cite{AH}. Finally in Section 8 we completely determine the
parameters $\alpha,z$ for which $\widehat D_\alpha(A\|B)\le D_{\alpha,z}(A\|B)$ and
$D_{\alpha,z}(A\|B)\le\widehat D_\alpha(A\|B)$ hold, respectively.

\section{Log-majorization (Part 1)}

First, note that Araki's log-majorization \cite{Ar} (see also \cite{AH}) implies that for every
$\alpha>0$,
\begin{equation}\label{F-2.1}
Q_{\alpha,z'}(A,B)\prec_{\log}Q_{\alpha,z}(A,B)\qquad
\mbox{if $0<z\le z'$}.
\end{equation}
The next proposition is an easy part of log-majorization results between $P_\alpha$ and
$Q_{\alpha,z}$.

\begin{prop}\label{P-2.1}
Let $A,B\in\bM_n^+$ with $B>0$.
\begin{itemize}
\item[\rm(1)] Assume that $0<\alpha<1$. Then for every $z>0$,
$$
P_\alpha(A,B)\prec_{\log}Q_{\alpha,z}(A,B).
$$
\item[\rm(2)] Assume that $1<\alpha\le2$ and $0<z\le\alpha-1$. Then
$$
P_\alpha(A,B)\prec_{\log}Q_{\alpha,z}(A,B).
$$
\item[\rm(3)] Assume that $\alpha>1$ and $z\ge\max\{\alpha/2,\alpha-1\}$. Then
$$
Q_{\alpha,z}(A,B)\prec_{\log}P_\alpha(A,B).
$$
\end{itemize}
\end{prop}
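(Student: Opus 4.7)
The unifying plan is to exploit the monotonicity property \eqref{F-2.1} --- that $Q_{\alpha,z}$ is decreasing in $z$ in the log-majorization order --- to reduce each of the three claims to a single extremal value of $z$.

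For Part (1), since every $Q_{\alpha,z}$ dominates (in log-majorization) the limit as $z\to\infty$, it suffices to prove $P_\alpha(A,B)\prec_{\log}\lim_{z\to\infty}Q_{\alpha,z}(A,B)$. A direct Lie--Trotter computation identifies this limit as $\exp\bigl(\alpha\log A+(1-\alpha)\log B\bigr)$. For $0<\alpha<1$ we have $P_\alpha(A,B)=B\,\#_\alpha\,A$, and the required bound $B\,\#_\alpha\,A\prec_{\log}\exp\bigl((1-\alpha)\log B+\alpha\log A\bigr)$ is the classical Ando--Hiai log-majorization (the $z\to\infty$ limit of Ando--Hiai applied to rescaled arguments).

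For Parts (2) and (3), the monotonicity \eqref{F-2.1} again reduces the claim to a single value of $z$: namely $z=\alpha-1$ for (2), and $z=z_0:=\max\{\alpha/2,\alpha-1\}$ for (3). I would then apply the standard antisymmetric tensor power trick: since $P_\alpha$ and $Q_{\alpha,z}$ are functorial under $\wedge^k$, the log-majorization is equivalent to the operator-norm inequality $\|P_\alpha(A,B)\|\le\|Q_{\alpha,\alpha-1}(A,B)\|$ (for (2)) and the reverse (for (3)), valid for all $A,B>0$. By positivity and homogeneity each norm inequality becomes an operator-inequality implication; for instance, in (3), it suffices to show that $P_\alpha(A,B)\le I$ implies $Q_{\alpha,z_0}(A,B)\le I$. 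Using operator monotonicity of $x\mapsto x^{1/\alpha}$, the former condition is equivalent to $A\le B^{(\alpha-1)/\alpha}$, and the latter is equivalent to $A^{\alpha/z_0}\le B^{(\alpha-1)/z_0}$.

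The main obstacle is bridging between these two operator inequalities. Because $\alpha/z_0>1$ and $\alpha/(\alpha-1)>1$ in the relevant regimes, L\"owner--Heinz monotonicity alone is insufficient, and the natural tool is Furuta's inequality: its parameter constraint $(1+r)q\ge p+r$ is in fact exactly what produces the critical thresholds $\alpha/2$ and $\alpha-1$ that appear in the statement. The technical heart of the proof is to choose the Furuta parameters $(p,q,r)$ correctly for each case and to verify that the resulting regime is tight, with no slack --- the sharpness claimed in Section 4 confirms that these thresholds are optimal.
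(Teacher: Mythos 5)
Your treatment of Part (1) is essentially the paper's own (Lie--Trotter for the $z\to\infty$ limit plus Ando--Hiai), and your reduction to a single extremal value of $z$ via \eqref{F-2.1} is a legitimate, slightly cleaner packaging (the paper makes this reduction only partially, inside Part (3)).

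The gap lies in Parts (2) and (3), specifically in the claim that $P_\alpha(A,B)\le I$ is \emph{equivalent} to $A\le B^{(\alpha-1)/\alpha}$. Only one implication is true: from $(B^{-1/2}AB^{-1/2})^\alpha\le B^{-1}$, L\"owner--Heinz with $x\mapsto x^{1/\alpha}$ gives $A\le B^{(\alpha-1)/\alpha}$, but the converse would require $x\mapsto x^\alpha$ to be operator monotone for $\alpha>1$, which it is not. This one-sidedness breaks your bridge. In Part (3) you would need $A\le B^{(\alpha-1)/\alpha}\implies A^{\alpha/z_0}\le B^{(\alpha-1)/z_0}$, which (writing $C:=B^{(\alpha-1)/\alpha}$) is exactly the statement $A\le C\implies A^q\le C^q$ with $q=\alpha/z_0\in(1,2]$. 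This is false for noncommuting matrices --- e.g.\ $A=\begin{pmatrix}1&1\\1&1\end{pmatrix}$, $C=\begin{pmatrix}2&1\\1&1\end{pmatrix}$ satisfy $A\le C$ but $C^2-A^2$ has negative determinant. No choice of Furuta parameters rescues a false implication; Furuta's theorem involves sandwiched expressions, not a direct power inequality of this shape. Symmetrically, in Part (2) you need the false direction of the ``equivalence'' to get back from $A\le B^{(\alpha-1)/\alpha}$ to $P_\alpha(A,B)\le I$.

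The reason the reduction loses the game is that $A\le B^{(\alpha-1)/\alpha}$ is \emph{strictly weaker} than $P_\alpha(A,B)\le I$ when $\alpha>1$ and $A,B$ do not commute, and the weaker condition genuinely does not control $Q_{\alpha,z_0}$. The paper keeps the full operator-inequality structure intact. For (2) it uses the identity $P_\alpha(A,B)=A^{1/2}(A^{1/2}B^{-1}A^{1/2})^{\alpha-1}A^{1/2}$ (from \eqref{F-2.5}/\eqref{F-2.7}), so that $Q_{\alpha,z}(A,B)\le I$, i.e.\ $A^{\alpha/z}\le B^{(\alpha-1)/z}$, first yields $B^{-1}\le A^{-\alpha/(\alpha-1)}$ via L\"owner--Heinz with exponent $z/(\alpha-1)\le1$, hence $A^{1/2}B^{-1}A^{1/2}\le A^{-1/(\alpha-1)}$, and then L\"owner--Heinz again with exponent $\alpha-1\le1$ gives $(A^{1/2}B^{-1}A^{1/2})^{\alpha-1}\le A^{-1}$. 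For (3) it sets $C:=B^{-1/2}AB^{-1/2}$, reduces to $z\le\alpha$ via \eqref{F-2.1}, and applies the analogous identity to $(B^{1/2}CB^{1/2})^{\alpha/z}$ followed by two L\"owner--Heinz steps with exponents $\alpha/z-1\le1$ and $1-(\alpha-1)/z<1$. That is the bridge your proposal is missing: it is built by transforming the full condition through the perspective identity, not by collapsing it to a single-power comparison.
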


\begin{proof}
(1) Although this is an immediate consequence of well-known Araki's and Ando-Hiai's
log-majorization (see \cite{Ar,AH}), we give a proof for the convenience of the reader. By
continuity we may assume that $A>0$ as well as $B>0$. From the Lie-Trotter formula, letting
$z'\to\infty$ in \eqref{F-2.1} gives
\begin{align}\label{F-2.2}
\exp(\alpha\log A+(1-\alpha)\log B)\prec_{\log}Q_{\alpha,z}(A,B),\qquad z>0.
\end{align}
On the other hand, when $0<\alpha<1$, the log-majorization in \cite{AH} says that
$$
P_\alpha(A,B)=B\#_\alpha A\prec_{\log}(B^p\#_\alpha A^p)^{1/p},\qquad0<p<1.
$$
Letting $p\searrow0$ and using \cite[Lemma 3.3]{HP2} we have
\begin{align}\label{F-2.3}
P_\alpha(A,B)\prec_{\log}\exp(\alpha\log A+(1-\alpha)\log B).
\end{align}
Combining \eqref{F-2.2} and \eqref{F-2.3} implies the asserted log-majorization.

(2)\enspace
By continuity we may assume that $A>0$ as well as $B>0$. The proof below is an easy application
of the standard anti-symmetric tensor power technique (see, e.g., \cite{AH}). To show that
$P_\alpha(A,B)\prec_{\log}Q_{\alpha,z}(A,B)$, it suffices to prove that
$$
\|P_\alpha(A,B)\|_\infty\le\|Q_{\alpha,z}(A,B)\|_\infty,
$$
where $\|\cdot\|_\infty$ denotes the operator norm. Due to the positive homogeneity in $A,B$
of both $P_\alpha$ and $Q_{\alpha,z}$ (i.e.,
$P_\alpha(\lambda A,\lambda B)=\lambda P_\alpha(A,B)$ for $\lambda>0$ and similarly for
$Q_{\alpha,z}$), it also suffices to prove that
\begin{equation}\label{F-2.4}
Q_{\alpha,z}(A,B)\le I\ \,\implies\ \,P_\alpha(A,B)\le I.
\end{equation}
Here recall the identity
\begin{equation}\label{F-2.5}
(B^{-1/2}AB^{-1/2})^\alpha=B^{-1/2}A^{1/2}(A^{1/2}B^{-1}A^{1/2})^{\alpha-1}A^{1/2}B^{-1/2},
\end{equation}
as seen from the well-known equality
\begin{equation}\label{F-2.6}
Xf(X^*X)=f(XX^*)X
\end{equation}
for every $X\in\bM_n$ and every continuous function $f$ on an interval containing the
eigenvalues of $X^*X$ (the proof is easy by approximating $f$ by polynomials). Therefore,
\begin{align}\label{F-2.7}
P_\alpha(A,B)=A^{1/2}(A^{1/2}B^{-1}A^{1/2})^{\alpha-1}A^{1/2},
\end{align}
so that for \eqref{F-2.4} it suffices to prove that
$$
B^{1-\alpha\over2z}A^{\alpha\over z}B^{1-\alpha\over2z}\le I
\ \,\implies\ \,A^{1/2}(A^{1/2}B^{-1}A^{1/2})^{\alpha-1}A^{1/2}\le I,
$$
or equivalently,
\begin{equation}\label{F-2.8}
A^{\alpha\over z}\le B^{\alpha-1\over z}
\ \,\implies\ \,(A^{1/2}B^{-1}A^{1/2})^{\alpha-1}\le A^{-1}.
\end{equation}
Now, assume that $1<\alpha\le2$ and $0<z\le\alpha-1$, and that
$A^{\alpha\over z}\le B^{\alpha-1\over z}$. Since $0<z/(\alpha-1)\le1$,
$$
B^{-1}=(B^{\alpha-1\over z})^{-{z\over\alpha-1}}
\le(A^{\alpha\over z})^{-{z\over\alpha-1}}=A^{-{\alpha\over\alpha-1}},
$$
and hence
$$
A^{1/2}B^{-1}A^{1/2}\le A^{1/2}A^{-{\alpha\over\alpha-1}}A^{1/2}=A^{-{1\over\alpha-1}}.
$$
Since $0<\alpha-1\le1$, we have
$$
(A^{1/2}B^{-1}A^{1/2})^{\alpha-1}\le(A^{-{1\over\alpha-1}})^{\alpha-1}=A^{-1},
$$
proving \eqref{F-2.8}.

(3)\enspace
As in the proof of (2) we may assume that both $A,B$ are positive definite, and prove the
implication opposite to \eqref{F-2.4}. In the present case, similarly to the above, it
suffices to prove that
$$
(B^{-1/2}AB^{-1/2})^\alpha\le B^{-1}
\ \,\implies\ \,A^{\alpha\over z}\le B^{\alpha-1\over z},
$$
or letting $C:=B^{-1/2}AB^{-1/2}>0$, we may prove that
\begin{equation}\label{F-2.9}
C^\alpha\le B^{-1}
\ \,\implies\ \,(B^{1/2}CB^{1/2})^{\alpha\over z}\le B^{\alpha-1\over z}.
\end{equation}
Now, assume that $\alpha>1$ and $z\ge\max\{\alpha/2,\alpha-1\}$. Note by \eqref{F-2.1} that
if once $Q_{\alpha,z}(A,B)\prec_{\log}P_\alpha(A,B)$ holds for $z=z_0$ with some $z_0>0$, then
the same does for all $z\ge z_0$. Hence we may further assume that $z\le\alpha$. If
$C^\alpha\le B^{-1}$, then $B\le C^{-\alpha}$ so that $C^{1/2}BC^{1/2}\le C^{1-\alpha}$. Since
$0\le{\alpha\over z}-1\le1$, we have
$$
(C^{1/2}BC^{1/2})^{{\alpha\over z}-1}\le(C^{1-\alpha})^{{\alpha\over z}-1}
=C^{(1-\alpha)({\alpha\over z}-1)}.
$$
Since by \eqref{F-2.5},
$$
(B^{1/2}CB^{1/2})^{\alpha\over z}
=B^{1/2}C^{1/2}(C^{1/2}BC^{1/2})^{{\alpha\over z}-1}C^{1/2}B^{1/2},
$$
we have
$$
(B^{1/2}CB^{1/2})^{\alpha\over z}\le B^{1/2}C^{1+(1-\alpha)({\alpha\over z}-1)}B^{1/2}
=B^{1/2}(C^\alpha)^{1-{\alpha-1\over z}}B^{1/2}.
$$
Since the assumption on $\alpha,z$ implies that $0\le1-{\alpha-1\over z}<1$, we have
$$
(B^{1/2}CB^{1/2})^{\alpha\over z}\le B^{1/2}(B^{-1})^{1-{\alpha-1\over z}}B^{1/2}
=B^{\alpha-1\over z},
$$
proving \eqref{F-2.9}.
\end{proof}

It should be noted that the above proofs of \eqref{F-2.8} and \eqref{F-2.9} are more or less
similar to that of \cite[Theorem 3]{Ta} for the Furuta inequality with negative powers. 

\begin{remark}\label{R-2.2}\rm
When $\alpha=2$, since $P_2(A,B)=AB^{-1}A$ is unitarily equivalent to $B^{-1/2}A^2B^{-1/2}$,
$P_2(A,B)\prec_{\log}Q_{2,z}(A,B)$ is equivalent to
$$
B^{-1/2}A^2B^{-1/2}\prec_{\log}(B^{-{1\over2z}}A^{2\over z}B^{-{1\over2z}})^z.
$$
Assume that $AB\ne BA$. Then from  Araki's log-majorization \cite{Ar} and
\cite[Theorem 2.1]{Hi1} we see that the above log-majorization holds true if and only if
$0<z\le1$. Similarly, $Q_{2,z}(A,B)\prec_{\log}P_2(A,B)$ holds if and only if $z\ge1$. These
are of course consistent with (2) and (3) of Proposition \ref{P-2.1}.
\end{remark}

In particular, when $A$ is a projection, we have:

\begin{prop}\label{P-2.3}
Let $\alpha>1$ and $z>0$. Assume that $E,B\in\bM_n^+$ are such that $E$ is a projection, $B>0$
and $EB\ne BE$. Then:
\begin{itemize}
\item[\rm(a)] $P_\alpha(E,B)\prec_{\log}Q_{\alpha,z}(E,B)$ if and only if $z\le\alpha-1$.
\item[\rm(b)] $Q_{\alpha,z}(E,B)\prec_{\log}P_\alpha(E,B)$ if and only if $z\ge\alpha-1$.
\end{itemize}
\end{prop}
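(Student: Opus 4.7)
The plan hinges on one observation: when $E$ is a projection, $P_\alpha(E,B)$ and $Q_{\alpha,\alpha-1}(E,B)$ have identical eigenvalue sequences. Once this is in hand, the proposition reduces to a strict-monotonicity question within the one-parameter family $\{Q_{\alpha,z}(E,B)\}_{z>0}$, to which \eqref{F-2.1} and its equality case apply directly.

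To establish the key identity, I would use \eqref{F-2.7} together with $E^{1/2}=E$ to write $P_\alpha(E,B)=E(EB^{-1}E)^{\alpha-1}E=(EB^{-1}E)^{\alpha-1}$; the last equality holds because $(EB^{-1}E)^{\alpha-1}$ is already supported on $\mathrm{ran}(E)$. Exploiting $E^{\alpha/z}=E$ in the definition of $Q_{\alpha,z}$ at $z=\alpha-1$ yields $Q_{\alpha,\alpha-1}(E,B)=(B^{-1/2}EB^{-1/2})^{\alpha-1}$. Setting $X=B^{-1/2}E$, one has $X^*X=EB^{-1}E$ and $XX^*=B^{-1/2}EB^{-1/2}$, so these two matrices share the same nonzero eigenvalues with the same multiplicities, summing to $\mathrm{rank}(E)$. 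Raising to the power $\alpha-1$ and accounting for the matching $n-\mathrm{rank}(E)$ zero eigenvalues, $P_\alpha(E,B)$ and $Q_{\alpha,\alpha-1}(E,B)$ have identical eigenvalue sequences.

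With the identity in place, the ``if'' directions in (a) and (b) follow at once from \eqref{F-2.1}, since $P_\alpha(E,B)\prec_{\log}Q_{\alpha,z}(E,B)$ now reads $Q_{\alpha,\alpha-1}(E,B)\prec_{\log}Q_{\alpha,z}(E,B)$, and analogously for the opposite direction. For the ``only if'' in (a), suppose $z>\alpha-1$ and $P_\alpha(E,B)\prec_{\log}Q_{\alpha,z}(E,B)$; combining with the reverse log-majorization from \eqref{F-2.1} and the eigenvalue identity above forces $Q_{\alpha,z}(E,B)$ and $Q_{\alpha,\alpha-1}(E,B)$ to have coinciding spectra. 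The equality case of Araki's log-majorization -- the same tool cited via Remark~\ref{R-2.2} -- then forces $EB=BE$, contradicting the hypothesis. The ``only if'' in (b) is symmetric.

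The main obstacle sits entirely in the necessity step: one must verify that coincidence of the spectra of $Q_{\alpha,z}(E,B)$ and $Q_{\alpha,z'}(E,B)$ for distinct $z,z'$ genuinely forces $E$ and $B$ to commute, rather than some weaker joint structure. I expect this to follow from the same equality case of Araki's log-majorization already invoked in Remark~\ref{R-2.2}, with the projection hypothesis simplifying the commutation analysis.
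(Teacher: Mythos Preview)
Your proposal is correct and follows essentially the same approach as the paper: both arguments compute $P_\alpha(E,B)=(EB^{-1}E)^{\alpha-1}$, identify the eigenvalues of $Q_{\alpha,z}(E,B)$ with those of $(EB^{(1-\alpha)/z}E)^z$, and then invoke Araki's log-majorization together with its equality case from \cite[Theorem 2.1]{Hi1}. The only cosmetic difference is that you pivot through the special value $z=\alpha-1$ and then use \eqref{F-2.1} plus the equality case separately, whereas the paper applies \cite[Theorem 2.1]{Hi1} directly to the comparison $(EB^{-1}E)^{\alpha-1}\prec_{\log}(EB^{(1-\alpha)/z}E)^z$ for general $z$, obtaining both directions at once; your concern about the ``main obstacle'' is therefore already resolved by exactly the reference you anticipated.
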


\begin{proof}
We write
\begin{align*}
P_\alpha(E,B)&=B^{1/2}(B^{-1/2}EB^{-1/2})^\alpha B^{1/2} \\
&=B^{1/2}B^{-1/2}E(EB^{-1}E)^{\alpha-1}EB^{-1/2}B^{1/2}=(EB^{-1}E)^{\alpha-1}.
\end{align*}
On the other hand, $Q_{\alpha,z}(E,B)=(B^{1-\alpha\over2z}EB^{1-\alpha\over2z})^z$ is
unitarily equivalent to $(EB^{1-\alpha\over2z}E)^z$. Hence
$P_\alpha(E,B)\prec_{\log}Q_{\alpha,z}(E,B)$ is equivalent to
$$
(EB^{-1}E)^{\alpha-1}\prec_{\log}(EB^{1-\alpha\over2z}E)^z.
$$
From \cite[Theorem 2.1]{Hi1}
we see that this holds if and only if $\alpha-1\ge z$. Similarly,
$Q_{\alpha,z}(E,B)\prec_{\log}P_\alpha(E,B)$ holds if and only if $z\ge\alpha-1$. 
\end{proof}

\section{Log-majorization (Part 2)}

Our final goal is to completely determine the regions of $\{(\alpha,z):\alpha,z>0\}$ for which
$P_\alpha\prec_{\log}Q_{\alpha,z}$ holds, or $Q_{\alpha,z}\prec_{\log}P_\alpha$ holds, or
neither holds true, respectively. The next step to the goal is to find a region in $\alpha\ge2$
where $P_\alpha\prec_{\log}Q_{\alpha,z}$ holds true. Since $P_2\prec_{\log}Q_{2,z}$ holds
if and only if $0<z\le1$ (see Remark \ref{R-2.2}), it would be reasonable to conjecture that
there is a region in $\alpha\ge2$ touching $\{(2,z):0<z\le1\}$ where
$P_\alpha\prec_{\log}Q_{\alpha,z}$ holds.

We show the next log-majorization result by elaborating the anti-symmetric tensor power
technique. The proof reveals essentially similar features to those of \cite[Theorem 4.1]{AH}
and \cite[Theorem 2.1]{AHO}.

\begin{prop}\label{P-3.1}
Assume that $\alpha\ge2$ and $0<z\le\alpha/2$. Then for every $A,B\in\bM_n^+$ with $B>0$,
$$
P_\alpha(A,B)\prec_{\log}Q_{\alpha,z}(A,B).
$$
\end{prop}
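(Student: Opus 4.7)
The plan is to follow the anti-symmetric tensor power technique of Proposition~\ref{P-2.1}. Since $P_\alpha$ and $Q_{\alpha,z}$ are $1$-homogeneous in $(A,B)$ and commute with $\Lambda^k$, the desired log-majorization reduces to the implication $Q_{\alpha,z}(A,B)\le I\Rightarrow P_\alpha(A,B)\le I$ for $A,B>0$. By \eqref{F-2.1} the function $z\mapsto Q_{\alpha,z}$ is log-decreasing, so it suffices to prove this implication at the upper boundary $z=\alpha/2$, where the hypothesis reduces to $A^2\le B^{2(\alpha-1)/\alpha}$.

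Next I would reshape the conclusion so that the hypothesis can be fed in. Using identity~\eqref{F-2.7}, $P_\alpha(A,B)\le I$ is equivalent to $(A^{1/2}B^{-1}A^{1/2})^{\alpha-1}\le A^{-1}$. Setting $Y:=A^{1/2}B^{-1/2}$, $X:=Y^*Y=B^{-1/2}AB^{-1/2}$ and $T:=YY^*=A^{1/2}B^{-1}A^{1/2}$, applying \eqref{F-2.6} to $f(t)=t^s$ gives the intertwining $YX^s=T^sY$ for every $s\ge0$, so $T^{\alpha-1}=YX^{\alpha-2}Y^*$. Conjugating the desired inequality by $Y^{-1}=B^{1/2}A^{-1/2}$ turns the goal into $X^{\alpha-2}\le B^{1/2}A^{-2}B^{1/2}$; and since the hypothesis inverts to $A^{-2}\ge B^{-2(\alpha-1)/\alpha}$ (hence $B^{1/2}A^{-2}B^{1/2}\ge B^{-(\alpha-2)/\alpha}$), it suffices to establish
\[
X^{\alpha-2}\le B^{-(\alpha-2)/\alpha}.
\]

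The heart of the proof is this last inequality. From the hypothesis, L\"owner--Heinz yields the family $A^s\le B^{s(\alpha-1)/\alpha}$ for $s\in[0,2]$. I would use two particular values: $s=1$ (giving $A\le B^{(\alpha-1)/\alpha}$, hence $X\le B^{-1/\alpha}$), and $s=\alpha/(\alpha-1)\in[1,2]$, which requires $\alpha\ge2$ and gives $A^{\alpha/(\alpha-1)}\le B$, i.e., $B^{-1}\le A^{-\alpha/(\alpha-1)}$. Combining, $AB^{-1}A\le A^{(\alpha-2)/(\alpha-1)}\le B^{(\alpha-2)/\alpha}$, and conjugation by $B^{-1/2}$ produces the key intermediate bound $X^2\le B^{-2/\alpha}$. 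For $\alpha\in[2,4]$ the exponent $(\alpha-2)/2$ lies in $[0,1]$, so L\"owner--Heinz immediately gives $X^{\alpha-2}=(X^2)^{(\alpha-2)/2}\le B^{-(\alpha-2)/\alpha}$.

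The main obstacle I foresee is the range $\alpha>4$. There one needs $X^{2^k}\le B^{-2^k/\alpha}$ for some $k$ with $2^k\ge\alpha-2$, and since squaring is not operator monotone, no plain L\"owner--Heinz step takes one from $X^{2^k}$ to $X^{2^{k+1}}$. This is where the proof must ``elaborate'' the anti-symmetric tensor technique as in \cite[Theorem 4.1]{AH} and \cite[Theorem 2.1]{AHO}: I would iterate the whole reduction, using Furuta-type bookkeeping of the exponents in products like $B^{-1/2}(AB^{-1})^{2^k}B^{-1/2}$ combined with the L\"owner--Heinz family derived from the hypothesis, to push $X^{2^k}\le B^{-2^k/\alpha}$ up by one level at a time until $(\alpha-2)/2^k\le1$, after which a single L\"owner--Heinz step concludes.
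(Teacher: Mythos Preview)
Your reduction to $z=\alpha/2$ via \eqref{F-2.1} and the reformulation of the target as $X^{\alpha-2}\le B^{-(\alpha-2)/\alpha}$ (with $X=B^{-1/2}AB^{-1/2}$) are correct and tidy; the paper keeps a general $z\le\alpha/2$ and works directly with $P_\alpha(A,B)=(AB^{-1})^m(B\,\#_\lambda\,A)(B^{-1}A)^m$, splitting into the cases $\alpha\in[2m,2m+1]$ and $\alpha\in(2m+1,2m+2)$. For $\alpha\in[2,4]$ your argument (deriving $X^2\le B^{-2/\alpha}$ from $B^{-1}\le A^{-\alpha/(\alpha-1)}$ and $A^{(\alpha-2)/(\alpha-1)}\le B^{(\alpha-2)/\alpha}$, then one L\"owner--Heinz step) is complete and cleaner than the paper's case analysis on that range.

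The genuine gap is $\alpha>4$. The doubling scheme you sketch does not close: writing $X^{2^{k+1}}=X^{2^{k-1}}X^{2^k}X^{2^{k-1}}$ and inserting $X^{2^k}\le B^{-2^k/\alpha}$ leaves the outer factors $X^{2^{k-1}}$, which for $k\ge2$ are products of length $\ge2$ in $A,B^{-1}$ and cannot be replaced by a single power of $B$ via L\"owner--Heinz; there is no Furuta-type step that squares an operator inequality. What does work is a \emph{linear} increment: from $X^m\le B^{-m/\alpha}$ write $X^{m+2}=X\,X^m\,X\le B^{-1/2}AB^{-(\alpha+m)/\alpha}AB^{-1/2}$, and provided $m\le\alpha-2$ one has $(\alpha+m)/\alpha\le 2(\alpha-1)/\alpha$, so $B^{-(\alpha+m)/\alpha}\le A^{-(\alpha+m)/(\alpha-1)}$ and then $A^{(\alpha-m-2)/(\alpha-1)}\le B^{(\alpha-m-2)/\alpha}$, giving $X^{m+2}\le B^{-(m+2)/\alpha}$. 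Iterating from $m=2$ up to the largest even integer not exceeding $\alpha$ and finishing with one L\"owner--Heinz step yields the claim. This peeling-by-two is exactly the engine of the paper's proof, transcribed into your $X$-variable; once you swap the $2^k$ for $m\mapsto m+2$, your outline and the paper's coincide.
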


\begin{proof}
By continuity we may assume that both $A,B$ are positive definite. Assume that $\alpha\ge2$
and $0<z\le\alpha/2$. Due to the anti-symmetric tensor power technique and the positive
homogeneity in $A,B$ of $P_\alpha$ and $Q_{\alpha,z}$, it suffices to prove that
$$
A^{\alpha\over z}\le B^{\alpha-1\over z}\ \,\implies\ \,P_\alpha(A,B)\le I.
$$
So assume that $A^{\alpha\over z}\le B^{\alpha-1\over z}$. We divide the proof into two cases.
First, assume that $2m\le\alpha\le2m+1$ for some $m\in\bN$, so write $\alpha=2m+\lambda$
with $0\le\lambda\le1$. Note that
\begin{align*}
P_\alpha(A,B)
&=B^{1/2}(B^{-1/2}AB^{-1/2})^m(B^{-1/2}AB^{-1/2})^\lambda(B^{-1/2}AB^{-1/2})^mB^{1/2} \\
&=(AB^{-1})^m(B\,\#_\lambda\,A)(B^{-1}A)^m.
\end{align*}
Since $0<z/\alpha\le1/2$, we have $A\le B^{\alpha-1\over\alpha}$ and hence
$$
B\,\#_\lambda\,A\le B\,\#_\lambda\,B^{\alpha-1\over\alpha}
=B^{1-\lambda}B^{(\alpha-1)\lambda\over\alpha}
=B^{\alpha-\lambda\over\alpha}.
$$
Therefore,
$$
P_\alpha(A,B)\le(AB^{-1})^{m-1}AB^{-{\alpha+\lambda\over\alpha}}A(B^{-1}A)^{m-1}.
$$
Since
$$
{(\alpha+\lambda)z\over\alpha(\alpha-1)}\le{\alpha+\lambda\over2(\alpha-1)}\le1
$$
thanks to $\alpha=2m+\lambda\ge2+\lambda$, we have
$$
B^{-{\alpha+\lambda\over\alpha}}
=(B^{\alpha-1\over z})^{-{(\alpha+\lambda)z\over\alpha(\alpha-1)}}
\le(A^{\alpha\over z})^{-{(\alpha+\lambda)z\over\alpha(\alpha-1)}}
=A^{-{\alpha+\lambda\over\alpha-1}},
$$
so that
$$
P_\alpha(A,B)\le(AB^{-1})^{m-1}A^{\alpha-2-\lambda\over\alpha-1}(B^{-1}A)^{m-1}.
$$
Since
$$
{(\alpha-2-\lambda)z\over\alpha(\alpha-1)}\le{\alpha-2-\lambda\over2(\alpha-1)}\le1,
$$
we have
$$
A^{\alpha-2-\lambda\over\alpha-1}
=(A^{\alpha\over z})^{(\alpha-2-\lambda)z\over\alpha(\alpha-1)}
\le(B^{\alpha-1\over z})^{(\alpha-2-\lambda)z\over\alpha(\alpha-1)}
=B^{\alpha-2-\lambda\over\alpha},
$$
so that
$$
P_\alpha(A,B)\le(AB^{-1})^{m-2}AB^{-{\alpha+2+\lambda\over\alpha}}A(B^{-1}A)^{m-2}.
$$
The above argument can be repeated to see that for $k=0,1,\dots,m-1$,
$$
P_\alpha(A,B)\le(AB^{-1})^{m-1-k}AB^{-{\alpha+2k+\lambda\over\alpha}}A(B^{-1}A)^{m-1-k},
$$
and hence
$$
P_\alpha(A,B)\le AB^{-{\alpha+2m-2+\lambda\over\alpha}}A=AB^{-{2(\alpha-1)\over\alpha}}A.
$$
Finally, since $2z/\alpha\le1$, we have
$$
B^{-{2(\alpha-1)\over\alpha}}\le(B^{\alpha-1\over z})^{-{2z\over\alpha}}
\le(A^{\alpha\over z})^{-{2z\over\alpha}}=A^{-2},
$$
so that $P_\alpha(A,B)\le I$.

Secondly, assume that $2m+1<\alpha<2m+2$ for some $m\in\bN$, so write $\alpha=2m+2-\lambda$
with $0<\lambda<1$. Note that
\begin{align*}
P_\alpha(A,B)
&=B^{1/2}(B^{-1/2}AB^{-1/2})^{m+1}(B^{-1/2}AB^{-1/2})^{-\lambda}
(B^{-1/2}AB^{-1/2})^{m+1}B^{1/2} \\
&=(AB^{-1})^mAB^{-1/2}(B^{-1/2}AB^{-1/2})^{-\lambda}B^{-1/2}A(B^{-1}A)^m \\
&=(AB^{-1})^mA(B\,\#_\lambda\,A)^{-1}A(B^{-1}A)^m.
\end{align*}
Since $0\le z/(\alpha-1)\le1$, we have $B\ge A^{\alpha\over\alpha-1}$ so that
$$
(B\,\#_\lambda\,A)^{-1}\le\bigl(A^{\alpha\over\alpha-1}\,\#_\lambda\,A\bigr)^{-1}
=\bigl(A^{\alpha(1-\lambda)\over\alpha-1}A^\lambda\bigr)^{-1}
=A^{-{\alpha-\lambda\over\alpha-1}}.
$$
Therefore,
$$
P_\alpha(A,B)\le(AB^{-1})^mA^{\alpha-2+\lambda\over\alpha-1}(B^{-1}A)^m.
$$
Since
$$
{(\alpha-2+\lambda)z\over\alpha(\alpha-1)}\le{\alpha-2+\lambda\over2(\alpha-1)}\le1,
$$
we have
$$
A^{\alpha-2+\lambda\over\alpha-1}
=(A^{\alpha\over z})^{(\alpha-2+\lambda)z\over\alpha(\alpha-1)}
\le(B^{\alpha-1\over z})^{(\alpha-2+\lambda)z\over\alpha(\alpha-1)}
=B^{\alpha-2+\lambda\over\alpha},
$$
so that
$$
P_\alpha(A,B)\le(AB^{-1})^{m-1}AB^{-{\alpha+2-\lambda\over\alpha}}A(B^{-1}A)^{m-1}.
$$
Since
$$
{(\alpha+2-\lambda)z\over\alpha(\alpha-1)}\le{\alpha+2-\lambda\over2(\alpha-1)}\le1,
$$
we have
$$
B^{-{\alpha+2-\lambda\over\alpha}}
=(B^{\alpha-1\over z})^{-{(\alpha+2-\lambda)z\over\alpha(\alpha-1)}}
\le(A^{\alpha\over z})^{-{(\alpha+2-\lambda)z\over\alpha(\alpha-1)}}
=A^{-{\alpha+2-\lambda\over\alpha-1}},
$$
so that
$$
P_\alpha(A,B)\le(AB^{-1})^{m-1}A^{\alpha-4+\lambda\over\alpha-1}(B^{-1}A)^{m-1}.
$$
Repeating the above argument we have
$$
P_\alpha(A,B)\le(AB^{-1})A^{\alpha-2m+\lambda\over\alpha-1}(B^{-1}A)
=AB^{-1}A^{2\over\alpha-1}B^{-1}A.
$$
Since
$$
A^{2\over\alpha-1}=(A^{\alpha\over z})^{2z\over\alpha(\alpha-1)}
\le(B^{\alpha-1\over z})^{2z\over\alpha(\alpha-1)}=B^{2\over\alpha}
$$
and
$$
B^{-{2(\alpha-1)\over\alpha}}=(B^{\alpha-1\over z})^{-{2z\over\alpha}}
\le(A^{\alpha\over z})^{-{2z\over\alpha}}=A^{-2},
$$
we finally have $P_\alpha(A,B)\le AB^{-{2(\alpha-1)\over\alpha}}A\le I$.
\end{proof}

Now, Theorem \ref{T-1.1} stated in the Introduction is proved from the log-majorization
results between $P_\alpha$ and $Q_{\alpha,z}$ obtained so far in Propositions \ref{P-2.1} and
\ref{P-3.1}.

We note that some discussions involving $P_\alpha$, $Q_\alpha=Q_{\alpha,1}$ and
$\widetilde Q_\alpha=Q_{\alpha,\alpha}$ were recently given in \cite[Sect.\ 5]{BJL}.

\section{Main theorem}

In this section we prove that Theorem \ref{T-1.1} is best possible with regard to the
assumptions on the parameters $\alpha,z$.

Assume that $\alpha>1$. For each $x,y>0$ and $\theta\in\bR$ define $2\times2$ positive
definite matrices
\begin{align}\label{F-4.1}
A_\theta:=\begin{bmatrix}\cos\theta&-\sin\theta\\\sin\theta&\cos\theta\end{bmatrix}
\begin{bmatrix}1&0\\0&x\end{bmatrix}
\begin{bmatrix}\cos\theta&\sin\theta\\-\sin\theta&\cos\theta\end{bmatrix},\qquad
B:=\begin{bmatrix}1&0\\0&y\end{bmatrix}.
\end{align}
We write
\begin{align*}
B^{-1/2}A_\theta B^{-1/2}
&=\begin{bmatrix}1+(x-1)\sin^2\theta&(1-x)y^{-1/2}{\sin2\theta\over2}\\
(1-x)y^{-1/2}{\sin2\theta\over2}&xy^{-1}+(1-x)y^{-1}\sin^2\theta\end{bmatrix} \\
&=G+\theta H+\theta^2K+o(\theta^2),
\end{align*}
where
$$
G:=\begin{bmatrix}1&0\\0&xy^{-1}\end{bmatrix},\quad
H:=\begin{bmatrix}0&(1-x)y^{-1/2}\\(1-x)y^{-1/2}&0\end{bmatrix},\quad
K:=\begin{bmatrix}x-1&0\\0&(1-x)y^{-1}\end{bmatrix},
$$
and $o(\theta^2)$ denotes a small value such that $o(\theta^2)/\theta^2\to0$ as $\theta\to0$.
We apply the Taylor formula with Fr\'echet derivatives (see e.g., \cite[Theorem 2.3.1]{Hi1})
to obtain
$$
(B^{-1/2}A_\theta B^{-1/2})^\alpha
=G^\alpha+D(x^\alpha)(G)(\theta H+\theta^2 K)
+{1\over2}D^2(x^\alpha)(G)(\theta H,\theta H)+o(\theta^2),
$$
where the second and the third terms in the right-hand side are the first and the second
Fr\'echet derivatives of $X\mapsto X^\alpha$ ($X\in\bM_2^+$, $X>0$) at $G$, respectively. By
Daleckii and Krein's derivative formula (see \cite[Theorem V.3.3]{Bh},
\cite[Theorem 2.3.1]{Hi1}) we have
\begin{align*}
&D(x^\alpha)(G)(\theta H+\theta^2 K) \\
&\quad=\begin{bmatrix}(x^\alpha)^{[1]}(1,1)&(x^\alpha)^{[1]}(1,xy^{-1})\\
(x^\alpha)^{[1]}(1,xy^{-1})&(x^\alpha)^{[1]}(xy^{-1},xy^{-1})\end{bmatrix}
\circ(\theta H+\theta^2 K) \\
&\quad=\begin{bmatrix}\alpha&{1-(xy^{-1})^\alpha\over1-xy^{-1}}\\
{1-(xy^{-1})^\alpha\over1-xy^{-1}}&\alpha(xy^{-1})^{\alpha-1}\end{bmatrix}
\circ(\theta H+\theta^2 K) \\
&\quad=\theta\begin{bmatrix}
0&{x^\alpha-y^\alpha\over x-y}(1-x)y^{{1\over2}-\alpha}\\
{x^\alpha-y^\alpha\over x-y}(1-x)y^{{1\over2}-\alpha}&0\end{bmatrix}
+\theta^2\begin{bmatrix}\alpha(x-1)&0\\
0&\alpha(1-x)x^{\alpha-1}y^{-\alpha}\end{bmatrix},
\end{align*}
where $(x^\alpha)^{[1]}$ denotes the first divided difference of $x^\alpha$ and $\circ$ means
the Schur (or Hadamard) product. For the second divided difference of $x^\alpha$ we compute
\begin{align*}
(x^\alpha)^{[2]}(1,1,xy^{-1})
&={\alpha-1-\alpha xy^{-1}+x^\alpha y^{-\alpha}\over(1-xy^{-1})^2}
={y\{(\alpha-1)y-\alpha x+x^\alpha y^{1-\alpha}\}\over(x-y)^2}, \\
(x^\alpha)^{[2]}(1,xy^{-1},xy^{-1})
&={1-\alpha x^{\alpha-1}y^{1-\alpha}+(\alpha-1)x^\alpha y^{-\alpha}\over(1-xy^{-1})^2} \\
&={y\{y-\alpha x^{\alpha-1}y^{2-\alpha}+(\alpha-1)x^\alpha y^{1-\alpha}\}\over(x-y)^2},
\end{align*}
and hence we have
\begin{align*}
{1\over2}D^2(x^\alpha)(G)(\theta H,\theta H)=\theta^2\begin{bmatrix}
{(x-1)^2\{(\alpha-1)y-\alpha x+x^\alpha y^{1-\alpha}\}\over(x-y)^2}&0\\
0&{(x-1)^2\{y-\alpha x^{\alpha-1}y^{2-\alpha}+(\alpha-1)x^\alpha y^{1-\alpha}\}\over(x-y)^2}
\end{bmatrix}.
\end{align*}
(In the above computation we have assumed that $x\ne y$.) Therefore, it
follows that
$$
(B^{-1/2}A_\theta B^{-1/2})^\alpha
=\begin{bmatrix}1+s_\alpha^{(1)}\theta^2&s_\alpha^{(3)}\theta\\
s_\alpha^{(3)}\theta&x^\alpha y^{-\alpha}+s_\alpha^{(2)}\theta^2\end{bmatrix}+o(\theta^2),
$$
where
\begin{align*}
s_\alpha^{(1)}&:=\alpha(x-1)
+{(x-1)^2\{(\alpha-1)y-\alpha x+x^\alpha y^{1-\alpha}\}\over(x-y)^2}, \\
s_\alpha^{(2)}&:=\alpha x^{\alpha-1}(1-x)y^{-\alpha}
+{(x-1)^2\{y-\alpha x^{\alpha-1}y^{2-\alpha}+(\alpha-1)x^\alpha y^{1-\alpha}\}\over(x-y)^2}.
\end{align*}
(The form of $s_\alpha^{(3)}$ is not written down here since it is unnecessary in the
computation below.) We hence arrive at
\begin{align}\label{F-4.2}
\Tr P_\alpha(A_\theta,B)
=1+x^\alpha y^{1-\alpha}+\bigl(s_\alpha^{(1)}+s_\alpha^{(2)}y\bigr)\theta^2+o(\theta^2).
\end{align}

Next, we write
\begin{align*}
B^{1-\alpha\over2z}A_\theta^{\alpha\over z}B^{1-\alpha\over z}
&=\begin{bmatrix}1+(x^{\alpha\over z}-1)\sin^2\theta
&(1-x^{\alpha\over z})y^{1-\alpha\over2z}{\sin2\theta\over2}\\
(1-x^{\alpha\over z})y^{1-\alpha\over2z}{\sin2\theta\over2}
&x^{\alpha\over z}y^{1-\alpha\over z}+(1-x^{\alpha\over z})y^{1-\alpha\over z}\sin^2\theta
\end{bmatrix} \\
&=\begin{bmatrix}1+(x^{\alpha\over z}-1)\theta^2
&(1-x^{\alpha\over z})y^{1-\alpha\over2z}\theta\\
(1-x^{\alpha\over z})y^{1-\alpha\over2z}\theta
&x^{\alpha\over z}y^{1-\alpha\over z}+(1-x^{\alpha\over z})y^{1-\alpha\over z}\theta^2
\end{bmatrix}+o(\theta^2).
\end{align*}
Since
$$
\det\Bigl(tI-B^{1-\alpha\over2z}A_\theta^{\alpha\over z}B^{1-\alpha\over z}\Bigr)
=t^2-\bigl\{1+x^{\alpha\over z}y^{1-\alpha\over z}
+(x^{\alpha\over z}-1)(1-y^{1-\alpha\over z})\theta^2\bigr\}t
+x^{\alpha\over z}y^{1-\alpha\over z}+o(\theta^2),
$$
the eigenvalues of $B^{1-\alpha\over2z}A_\theta^{\alpha\over z}B^{1-\alpha\over z}$ are
\begin{align*}
t_{\alpha,z,\theta}^\pm&={1\over2}\biggl[1+x^{\alpha\over z}y^{1-\alpha\over z}
+(x^{\alpha\over z}-1)(1-y^{1-\alpha\over z})\theta^2 \\
&\qquad\pm\sqrt{(1-x^{\alpha\over z}y^{1-\alpha\over z})^2
+2(x^{\alpha\over z}-1)(1-y^{1-\alpha\over z})
(1+x^{\alpha\over z}y^{1-\alpha\over z})\theta^2}\biggr]+o(\theta^2).
\end{align*}
Assuming that $1-x^{\alpha\over z}y^{1-\alpha\over z}>0$ (this is the case when we let
$y\to\infty$ for any fixed $x>0$), we have
\begin{align*}
t_{\alpha,z,\theta}^\pm&={1\over2}\biggl[1+x^{\alpha\over z}y^{1-\alpha\over z}
+(x^{\alpha\over z}-1)(1-y^{1-\alpha\over z})\theta^2 \\
&\qquad\pm\biggl\{1-x^{\alpha\over z}y^{1-\alpha\over z}
+{(x^{\alpha\over z}-1)(1-y^{1-\alpha\over z})
(1+x^{\alpha\over z}y^{1-\alpha\over z})\over
1-x^{\alpha\over z}y^{1-\alpha\over z}}\theta^2\biggr\}\biggr]+o(\theta^2),
\end{align*}
so that
\begin{align*}
t_{\alpha,z,\theta}^+&=1+{(x^{\alpha\over z}-1)(1-y^{1-\alpha\over z})\over
1-x^{\alpha\over z}y^{1-\alpha\over z}}\theta^2+o(\theta^2), \\
t_{\alpha,z,\theta}^-&=x^{\alpha\over z}y^{1-\alpha\over z}
\biggl\{1-{(x^{\alpha\over z}-1)(1-y^{1-\alpha\over z})
\over1-x^{\alpha\over z}y^{1-\alpha\over z}}\theta^2\biggr\}+o(\theta^2).
\end{align*}
Therefore, we have
\begin{align}
\Tr Q_{\alpha,z}(A_\theta,B)&=(t_{\alpha,z,\theta}^+)^z+(t_{\alpha,z,\theta}^-)^z \nonumber\\
&=1+z{(x^{\alpha\over z}-1)(1-y^{1-\alpha\over z})\over
1-x^{\alpha\over z}y^{1-\alpha\over z}}\theta^2 \nonumber\\
&\qquad+x^\alpha y^{1-\alpha}\biggl\{1-z{(x^{\alpha\over z}-1)(1-y^{1-\alpha\over z})
\over1-x^{\alpha\over z}y^{1-\alpha\over z}}\theta^2\biggr\}+o(\theta^2) \nonumber\\
&=1+x^\alpha y^{1-\alpha}+z{(x^{\alpha\over z}-1)
(1-y^{1-\alpha\over z})(1-x^\alpha y^{1-\alpha})\over
1-x^{\alpha\over z}y^{1-\alpha\over z}}\theta^2+o(\theta^2). \label{F-4.3}
\end{align}

Now, suppose that $Q_{\alpha,z}(A_\theta,B)\prec_{\log}P_\alpha(A_\theta,B)$ holds for all
$\theta\ne0$. Then we must have $\Tr Q_{\alpha,z}(A_\theta,B)\le\Tr P_\alpha(A_\theta,B)$.
(Since $\det Q_{\alpha,z}(A_\theta,B)=\det P_\alpha(A_\theta,B)$,
$Q_{\alpha,z}(A_\theta,B)\prec_{\log}P_\alpha(A_\theta,B)$ is indeed equivalent to
$\Tr Q_{\alpha,z}(A_\theta,B)\le\Tr P_\alpha(A_\theta,B)$ in the $2\times2$ case here.)
So by \eqref{F-4.2} and \eqref{F-4.3} it follows that
$$
z{(x^{\alpha\over z}-1)
(1-y^{1-\alpha\over z})(1-x^\alpha y^{1-\alpha})\over
1-x^{\alpha\over z}y^{1-\alpha\over z}}
\le s_\alpha^{(1)}+s_\alpha^{(2)}y.
$$
For any $x>0$, let $y\to\infty$; then the above left-hand side converges to
$z(x^{\alpha\over z}-1)$, while $s_\alpha^{(1)}\to\alpha(x-1)$ and $s_\alpha^{(2)}\to(x-1)^2$
thanks to $\alpha>1$. Hence we must have for every $x>0$,
$$
z(x^{\alpha\over z}-1)\le\alpha(x-1)+(x-1)^2.
$$
Letting $x\searrow0$ gives $-z\le-\alpha+1$, i.e., $z\ge\alpha-1$. Moreover, for any $x>1$,
$$
z{x^{\alpha\over z}-1\over x-1}\le x+\alpha-1,
$$
which holds true only when $\alpha/z\le2$, i.e., $z\ge\alpha/2$. On the other hand, suppose
that $P_\alpha(A_\theta,B)\prec_{\log}Q_{\alpha,z}(A_\theta,B)$ holds for all $\theta\ne0$.
Then, similarly to the above case, $z\le\alpha-1$ and $z\le\alpha/2$ must follow.

Thus, combining the above discussions with Theorem \ref{T-1.1} proves our main theorem as
follows:

\begin{thm}\label{T-4.1}
Let $\alpha,z>0$ with $\alpha\ne1$.
\begin{itemize}
\item[\rm(a)] The following conditions are equivalent:
\begin{itemize}
\item[\rm(i)] $P_\alpha(A,B)\prec_{\log}Q_{\alpha,z}(A,B)$ for every $A,B\in\bM_n^+$,
$n\in\bN$, with $B>0$;
\item[\rm(ii)] $\Tr P_\alpha(A,B)\le\Tr Q_{\alpha,z}(A,B)$ for every $A,B\in\bM_n^+$,
$n\in\bN$, with $B>0$;
\item[\rm(iii)] $P_\alpha(A,B)\prec_{\log}Q_{\alpha,z}(A,B)$ for every $A,B\in\bM_2^+$ with
$A,B>0$;
\item[\rm(iv)] either $0<\alpha<1$ and $z>0$ is arbitrary, or $\alpha>1$ and
$0<z\le\min\{\alpha/2,\alpha-1\}$.
\end{itemize}
\item[\rm(b)] The following conditions are equivalent:
\begin{itemize}
\item[\rm(i)$'$] $Q_{\alpha,z}(A,B)\prec_{\log}P_\alpha(A,B)$ for every $A,B\in\bM_n^+$,
$n\in\bN$, with $B>0$;
\item[\rm(ii)$'$] $\Tr Q_{\alpha,z}(A,B)\le\Tr P_\alpha(A,B)$ for every $A,B\in\bM_n^+$,
$n\in\bN$, with $B>0$;
\item[\rm(iii)$'$] $Q_{\alpha,z}(A,B)\prec_{\log}P_\alpha(A,B)$ for every $A,B\in\bM_2^+$ with
$A,B>0$;
\item[\rm(iv)$'$] $\alpha>1$ and $z\ge\max\{\alpha/2,\alpha-1\}$.
\end{itemize}
\end{itemize}
\end{thm}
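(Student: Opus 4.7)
The plan is to prove parts (a) and (b) by the cyclic chain (iv)$\Rightarrow$(i)$\Rightarrow$(ii)$\Rightarrow$(iii)$\Rightarrow$(iv), with an entirely parallel chain for (b). The step (iv)$\Rightarrow$(i) is precisely Theorem~\ref{T-1.1}, already assembled from Propositions~\ref{P-2.1} and~\ref{P-3.1}, while (i)$\Rightarrow$(ii) is immediate, since $\prec_{\log}$ implies weak majorization and $\Tr X=\sum_i\lambda_i(X)$ on $\bM_n^+$. The same goes for (b) by symmetry, so all that needs real argument is (ii)$\Rightarrow$(iii) and (iii)$\Rightarrow$(iv) in each part.

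For (ii)$\Rightarrow$(iii), the key observation is the determinant identity
\[
\det P_\alpha(A,B)\;=\;(\det A)^\alpha(\det B)^{1-\alpha}\;=\;\det Q_{\alpha,z}(A,B),
\]
obtained by direct computation from the defining formulas. On $2\times 2$ positive definite matrices with this common determinant $d>0$, the larger eigenvalue of any $X\in\bM_2^+$ with $\det X=d$ equals $\bigl(\Tr X+\sqrt{(\Tr X)^2-4d}\bigr)/2$, which is strictly increasing in $\Tr X$ on $[2\sqrt d,\infty)$. Hence $\Tr X\le\Tr Y$ together with $\det X=\det Y$ is equivalent to $X\prec_{\log}Y$ in the $2\times 2$ case; restricting (ii) to $n=2$ then gives (iii), and similarly in (b).

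The decisive step is (iii)$\Rightarrow$(iv), and its main obstacle---the Taylor expansion of $\Tr P_\alpha(A_\theta,B)$ and $\Tr Q_{\alpha,z}(A_\theta,B)$---has already been carried out in \eqref{F-4.2} and \eqref{F-4.3} preceding the theorem statement. Plug (iii) into the $2\times 2$ family $A_\theta,B$ of \eqref{F-4.1}; by the $2\times 2$ equivalence of the previous paragraph, the log-majorization becomes $\Tr P_\alpha(A_\theta,B)\le\Tr Q_{\alpha,z}(A_\theta,B)$ for all small $\theta\ne 0$. Matching $\theta^2$-coefficients in \eqref{F-4.2} and \eqref{F-4.3} and letting $y\to\infty$ for each fixed $x>0$ yields, under $\alpha>1$,
\[
\alpha(x-1)+(x-1)^2\;\le\;z\bigl(x^{\alpha/z}-1\bigr),\qquad x>0.
\]
The limit $x\searrow 0$ forces $z\le\alpha-1$ and $x\to\infty$ forces $\alpha/z\ge 2$, i.e.\ $z\le\alpha/2$; for $0<\alpha<1$ there is nothing further to check, since (iv) then imposes no restriction on $z$. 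Part (b) runs on the mirror Taylor argument with the reversed trace inequality, yielding $z\ge\alpha-1$ and $z\ge\alpha/2$ for $\alpha>1$; the case $0<\alpha<1$ in (iii)$'$ is ruled out by combining it with (a)(i), since equal eigenvalues on the $2\times 2$ family $A_\theta,B$ would force the two $\theta^2$-coefficients in \eqref{F-4.2} and \eqref{F-4.3} to agree as functions of $x,y$, which fails by inspection.
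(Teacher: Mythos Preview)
Your proof is correct and follows essentially the same route as the paper: the implication (iv)$\Rightarrow$(i) is Theorem~\ref{T-1.1}, the steps (i)$\Rightarrow$(ii)$\Rightarrow$(iii) are the easy determinant/trace observations (the paper makes the same $2\times2$ remark in the parenthetical before Theorem~\ref{T-4.1}), and (iii)$\Rightarrow$(iv) comes from the Taylor coefficients \eqref{F-4.2}--\eqref{F-4.3} together with the limits $y\to\infty$, $x\searrow0$, $x\to\infty$ exactly as in Section~4. One small point: your treatment of the case $0<\alpha<1$ in (b), where you invoke ``fails by inspection'', deserves a sentence more---the formulas \eqref{F-4.2}--\eqref{F-4.3} were derived in the paper under $\alpha>1$ (in particular the sign assumption $1-x^{\alpha/z}y^{(1-\alpha)/z}>0$ was justified via $y\to\infty$, which goes the wrong way for $\alpha<1$), so you should either note that the expansions remain valid for suitable $x,y$ with $0<\alpha<1$ and exhibit one pair where the coefficients differ, or simply observe that equality of all eigenvalues on $\bM_2^+$ would force $\Tr Q_{\alpha,z}(A,B)=\Tr Q_{\alpha,z'}(A,B)$ for every $z'>0$ (via Theorem~\ref{T-1.1}(1)), which by \cite[Theorem~2.1]{Hi1} forces $AB=BA$.
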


The theorem says that neither $P_\alpha(A,B)\prec_{\log}Q_{\alpha,z}(A,B)$ nor
$Q_{\alpha,z}(A,B)\prec_{\log}P_\alpha(A,B)$ holds in general in the regions of
$1<\alpha<2$ and $\alpha-1<z<\alpha/2$ and of $\alpha>2$ and $\alpha/2<z<\alpha-1$.

\section{Further extension}

For $A,B\in\bM_n^+$ with $B>0$, taking the expression
$Q_{\alpha,z}(A,B)=Q_\alpha(A^{1/z},B^{1/z})^z$ into account, we may define the two-parameter
extension of $P_\alpha$ as
$$
P_{\alpha,r}(A,B):=P_\alpha(A^{1/r},B^{1/r})^r
=\{B^{1/2r}(B^{-1/2r}A^{1/r}B^{-1/2r})^\alpha B^{1/2r}\}^r,\qquad\alpha,r>0.
$$
The log-majorization in \cite{AH} says that when $0<\alpha<1$,
\begin{align}\label{F-5.1}
P_{\alpha,r}(A,B)\prec_{\log}P_{\alpha,r'}(A,B)\qquad
\mbox{if $0<r\le r'$}.
\end{align}
For every $A,B>0$ and $\alpha>0$, note by \eqref{F-2.7} that
$P_\alpha(A,B)=A^{1/2}(A^{-1/2}BA^{-1/2})^\beta A^{1/2}$ where $\beta:=1-\alpha$ (the
right-hand side is often denoted by $A\natural_\beta B$ when $\beta\not\in[0,1]$ instead of
$A\#_\beta B$ for $\beta\in[0,1]$). Thus, the
log-majorization recently obtained in \cite[Theorem 3.1]{KS} is rephrased as follows: When
$1<\alpha\le2$, for every $A,B\in\bM_n^+$ with $B>0$,
\begin{align}\label{F-5.2}
P_{\alpha,r}(A,B)\prec_{\log}P_{\alpha,r'}(A,B)\qquad
\mbox{if $0<r'\le r$}.
\end{align}
In particular, when $\alpha=2$, this reduces to Araki's log-majorization (see Remark
\ref{R-2.2}). 

For each $\alpha,r,z>0$ with $\alpha\ne1$, since it is easy
to see that $P_{\alpha,r}(A,B)\prec_{\log}Q_{\alpha,z}(A,B)$ (resp.,
$Q_{\alpha,z}(A,B)\prec_{\log}P_{\alpha,r}(A,B)$) for every $A,B\in\bM_n^+$ with
$B>0$ if and only if $P_\alpha(A,B)\prec_{\log}Q_{\alpha,z/r}(A,B)$ (resp.,
$Q_{\alpha,z/r}(A,B)\prec_{\log}P_\alpha(A,B)$) for every $A,B\in\bM_n^+$ with $B>0$. Thus,
we can extend Theorem \ref{T-4.1} in the following way:

\begin{prop}\label{P-5.1}
Let $\alpha,r,z>0$ with $\alpha\ne1$
\begin{itemize}
\item[\rm(a)] The following conditions are equivalent:
\begin{itemize}
\item[\rm(i)] $P_{\alpha,r}(A,B)\prec_{\log}Q_{\alpha,z}(A,B)$ for every $A,B\in\bM_n^+$,
$n\in\bN$, with $B>0$;
\item[\rm(ii)] either $0<\alpha<1$ and $r,z>0$ are arbitrary, or $\alpha>1$ and
$0<z/r\le\min\{\alpha/2,\alpha-1\}$.
\end{itemize}
\item[\rm(b)] The following conditions are equivalent:
\begin{itemize}
\item[\rm(i)$'$] $Q_{\alpha,z}(A,B)\prec_{\log}P_{\alpha,r}(A,B)$ for every $A,B\in\bM_n^+$,
$n\in\bN$, with $B>0$;
\item[\rm(ii)$'$] $\alpha>1$ and $z/r\ge\max\{\alpha/2,\alpha-1\}$.
\end{itemize}
\end{itemize}
\end{prop}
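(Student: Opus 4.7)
The proposition reduces cleanly to Theorem \ref{T-4.1} via a change of variable, and my plan is to make the reduction precise, then quote. The key identity to verify is
\begin{equation*}
Q_{\alpha,z}(A,B)=Q_{\alpha,z/r}\bigl(A^{1/r},B^{1/r}\bigr)^r,
\end{equation*}
which follows by a direct computation: substituting $A^{1/r},B^{1/r}$ and replacing $z$ by $z/r$ in the definition of $Q_{\alpha,z/r}$ turns the exponent $(1-\alpha)/(2(z/r))$ into $r(1-\alpha)/(2z)$, and the outer exponent $z/r$ combined with the final $r$-th power gives back exactly $Q_{\alpha,z}(A,B)$. By the very definition of $P_{\alpha,r}$, we also have $P_{\alpha,r}(A,B)=P_\alpha(A^{1/r},B^{1/r})^r$.

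Second, I will use the standard fact that log-majorization is invariant under taking positive powers: $X\prec_{\log}Y$ on $\bM_n^+$ is equivalent to $X^r\prec_{\log}Y^r$ for any $r>0$, because log-majorization is a statement about inequalities between products of eigenvalues, and the map $\lambda\mapsto\lambda^r$ is strictly increasing on $[0,\infty)$. Combining with the two identities above,
\begin{equation*}
P_{\alpha,r}(A,B)\prec_{\log}Q_{\alpha,z}(A,B)
\iff P_\alpha\bigl(A^{1/r},B^{1/r}\bigr)\prec_{\log}Q_{\alpha,z/r}\bigl(A^{1/r},B^{1/r}\bigr),
\end{equation*}
and similarly with the two sides interchanged. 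Since the substitution $(A,B)\mapsto(A^{1/r},B^{1/r})$ is a bijection of the set $\{(A,B)\in\bM_n^+\times\bM_n^+:B>0\}$ onto itself (and preserves the condition $A>0$ as well), the quantification over all such pairs on one side matches the quantification on the other side.

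Third, I apply Theorem \ref{T-4.1}(a) and (b) with parameter $z/r$ in place of $z$. For part (a), condition (i) is equivalent to: either $0<\alpha<1$ with arbitrary $z/r>0$, or $\alpha>1$ with $0<z/r\le\min\{\alpha/2,\alpha-1\}$, which is exactly (ii). For part (b), the corresponding condition becomes $\alpha>1$ and $z/r\ge\max\{\alpha/2,\alpha-1\}$. No step presents any real obstacle; the only thing to be careful about is the bookkeeping of exponents in verifying the identity for $Q_{\alpha,z}$ and making sure the bijectivity of $(A,B)\mapsto(A^{1/r},B^{1/r})$ justifies transferring the universal quantifier.
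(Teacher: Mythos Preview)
Your proposal is correct and follows exactly the same route as the paper: the paper's entire argument is the sentence preceding the proposition, which asserts without details that $P_{\alpha,r}(A,B)\prec_{\log}Q_{\alpha,z}(A,B)$ for all $A,B$ is equivalent to $P_\alpha(A,B)\prec_{\log}Q_{\alpha,z/r}(A,B)$ for all $A,B$, and then invokes Theorem~\ref{T-4.1}. You have simply supplied the bookkeeping (the identity $Q_{\alpha,z}(A,B)=Q_{\alpha,z/r}(A^{1/r},B^{1/r})^r$, the invariance of log-majorization under positive powers, and the bijectivity of $(A,B)\mapsto(A^{1/r},B^{1/r})$) that the paper leaves implicit.
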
 

Although Proposition \ref{P-5.1} is just a slight modification of Theorem \ref{T-4.1}, it can
be used to show the following log-majorization supplementary to \eqref{F-5.1} and \eqref{F-5.2}:

\begin{cor}\label{C-5.2}
Assume that $\alpha\ge2$. For every $A,B\in\bM_n^+$ with $B>0$,
$$
P_{\alpha,r}(A,B)\prec_{\log}P_{\alpha,r'}(A,B)\qquad
\mbox{if $0<r'\le{\alpha\over2(\alpha-1)}r$}.
$$
Hence $P_{\alpha,r}(A,B)\prec_{\log}P_{\alpha,r'}(A,B)$ for all $\alpha\ge2$ if $0<r'\le r/2$.
\end{cor}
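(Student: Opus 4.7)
The plan is to interpolate through a suitable $Q_{\alpha,z}$ and then invoke transitivity of log-majorization. Since $\alpha\ge2$, we have $\alpha-1\ge\alpha/2$, so $\min\{\alpha/2,\alpha-1\}=\alpha/2$ and $\max\{\alpha/2,\alpha-1\}=\alpha-1$. Thus Proposition \ref{P-5.1}(a) specializes to: $P_{\alpha,r}(A,B)\prec_{\log}Q_{\alpha,z}(A,B)$ whenever $0<z\le\tfrac{\alpha}{2}r$; and Proposition \ref{P-5.1}(b) specializes to: $Q_{\alpha,z}(A,B)\prec_{\log}P_{\alpha,r'}(A,B)$ whenever $z\ge(\alpha-1)r'$.

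First I would choose $z:=\tfrac{\alpha}{2}r$. Then Proposition \ref{P-5.1}(a) immediately gives $P_{\alpha,r}(A,B)\prec_{\log}Q_{\alpha,z}(A,B)$. Next I would verify that $z\ge(\alpha-1)r'$; this is precisely the hypothesis $r'\le\tfrac{\alpha}{2(\alpha-1)}r$. Proposition \ref{P-5.1}(b) then gives $Q_{\alpha,z}(A,B)\prec_{\log}P_{\alpha,r'}(A,B)$. Chaining the two log-majorizations (note that log-majorization is transitive) yields the asserted $P_{\alpha,r}(A,B)\prec_{\log}P_{\alpha,r'}(A,B)$.

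For the final sentence I would simply observe that $\tfrac{\alpha}{2(\alpha-1)}\ge\tfrac12$ for every $\alpha\ge2$ (equivalent to $\alpha\ge\alpha-1$, which is trivial), so the hypothesis $r'\le r/2$ implies $r'\le\tfrac{\alpha}{2(\alpha-1)}r$ and the first part applies. There is no serious obstacle here: the entire argument is a matter of identifying the correct interpolating exponent $z$ and checking the one-line inequality $(\alpha-1)r'\le\tfrac{\alpha}{2}r$ under the stated hypothesis, since the operator-theoretic content has already been packaged into Proposition \ref{P-5.1}.
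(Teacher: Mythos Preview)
Your proof is correct and follows essentially the same route as the paper: choose $z=\tfrac{\alpha}{2}r$, apply Proposition~\ref{P-5.1}(a) to get $P_{\alpha,r}\prec_{\log}Q_{\alpha,z}$, then apply Proposition~\ref{P-5.1}(b) (using $(r\alpha/2)/r'\ge\alpha-1$) to get $Q_{\alpha,z}\prec_{\log}P_{\alpha,r'}$, and chain. Your explicit identification of $\min$ and $\max$ for $\alpha\ge2$ and your justification of the final sentence are minor elaborations on what the paper leaves implicit.
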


\begin{proof}
Let $\alpha\ge2$. By Proposition \ref{P-5.1}\,(a) we have
$$
P_{\alpha,r}(A,B)\prec_{\log}Q_{\alpha,r\alpha/2}(A,B).
$$
Since $(r\alpha/2)/r'\ge\alpha-1$, Proposition \ref{P-5.1}\,(b) implies that
$$
Q_{\alpha,r\alpha/2}(A,B)\prec_{\log}P_{\alpha,r'}(A,B),
$$
so that the asserted log-majorization follows.
\end{proof}

\begin{problem}\label{P-5.3}\rm
Although the assumption $\beta=1-\alpha\in[-1,0)$ (or $1<\alpha\le2$) seems essential in the
proof of \eqref{F-5.2} in \cite{KS}, it is unknown whether \eqref{F-5.2} holds true even for
$\alpha>2$ (i.e., the bound $\alpha/2(\alpha-1)$ in the corollary can be removed) or not. For
example, when $\alpha={m+1}\in\bN$ with $m\in\bN$, $m\ge2$, noting $P_{m+1}(A,B)=(AB^{-1})^mA$
and replacing $B^{-1}$ with $B$ and $1/r$ with $r$, \eqref{F-5.2} is equivalent to the
following extended Araki's log-majorization for every $A,B\in\bM_n^+$:
\begin{align}\label{F-5.3}
((AB)^mA)^r\prec_{\log}(A^rB^r)^mA^r\qquad\mbox{if $r\ge1$},
\end{align}
which seems difficult to hold in general, while no counter-example is at the moment known to
us. But Corollary \ref{C-5.2} implies that $((AB)^mA)^r\prec_{\log}(A^rB^r)^mA^r$ for
$r\ge2m/(m+1)$. Here is a simple argument when $m=2$. For $m=2$, to prove \eqref{F-5.3}, it
suffices to show that for $0<p\le1$, $ABABA\le I$ $\implies$ $A^pB^pA^pB^pA^p\le I$. Assume
the left-hand inequality, i.e., $(A^{1/2}BA^{1/2})^2\le A^{-1}$; then $B\le A^{-3/2}$ and so
$A^p\le B^{-2p/3}$. Hence $A^pB^pA^pB^pA^p\le A^pB^{4p/3}A^p$. If $p\le3/4$, then
$B^{4p/3}\le A^{-2p}$ and so $A^pB^pA^pB^pA^p\le I$. Therefore,
$(ABABA)\prec_{\log}A^rB^rA^rB^rA^r$ if $r\ge4/3$, which is just the case $\alpha=3$ of
the corollary. The same argument works well when $\alpha=m+1$ for any $m\in\bN$,
$m\ge2$, proving directly the $\alpha=m+1$ case of the corollary.
\end{problem}

\section{Norm inequalities and their equality cases}

A norm $\|\cdot\|$ on $\bM_n$ is said to be \emph{unitarily invariant} if $\|UXV\|=\|X\|$ for
all $X\in\bM_n$ and all unitaries $U,V\in\bM_n$. We say (see \cite{Hi1}) that a unitarily
invariant norm $\|\cdot\|$ is \emph{strictly increasing} if for $X,Y\in\bM_n^+$, $X\le Y$ and
$\|X\|=\|Y\|$ imply $X=Y$. For example, the Schatten $p$-norm $\|X\|_p:=(\Tr|X|^p)^{1/p}$ is
strictly increasing when $1\le p<\infty$.

Theorem \ref{T-1.1} implies the following:

\begin{cor}\label{C-6.1}
Let $A,B\in\bM_n^+$ with $B>0$ and $\|\cdot\|$ be a unitarily invariant norm on $\bM_n$.
\begin{itemize}
\item[\rm(1)] If $0<\alpha<1$, then $\|P_\alpha(A,B)\|\le\|Q_{\alpha,z}(A,B)\|$ for all $z>0$.
\item[\rm(2)] If $\alpha>1$ and $0<z\le\min\{\alpha/2,\alpha-1\}$, then
$\|P_\alpha(A,B)\|\le\|Q_{\alpha,z}(A,B)\|$.
\item[\rm(3)] If $\alpha>1$ and $z\ge\max\{\alpha/2,\alpha-1\}$, then
$\|Q_{\alpha,z}(A,B)\|\le\|P_\alpha(A,B)\|$.
\end{itemize}
\end{cor}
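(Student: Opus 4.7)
The plan is to reduce this corollary to a direct application of Theorem \ref{T-1.1} together with the standard link between log-majorization and unitarily invariant norms that is already recalled in the Introduction. Concretely, for positive semidefinite matrices $X,Y\in\bM_n^+$, the implication
\[
X\prec_{\log}Y\ \implies\ X\prec_wY\ \iff\ \|X\|\le\|Y\|\ \text{for every unitarily invariant norm}\ \|\cdot\|
\]
is precisely the chain of facts stated (with references to \cite{An,Bh,Hi2,MOA}) at the beginning of the paper. So the proof reduces to invoking the appropriate clause of Theorem \ref{T-1.1} and quoting this implication.

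For (1), Theorem \ref{T-1.1}\,(1) gives $P_\alpha(A,B)\prec_{\log}Q_{\alpha,z}(A,B)$ whenever $0<\alpha<1$ and $z>0$; applying the above implication yields $\|P_\alpha(A,B)\|\le\|Q_{\alpha,z}(A,B)\|$ for every unitarily invariant norm. For (2), Theorem \ref{T-1.1}\,(2) gives the same log-majorization under the hypothesis $\alpha>1$ and $0<z\le\min\{\alpha/2,\alpha-1\}$, and the conclusion follows in the same way. For (3), Theorem \ref{T-1.1}\,(3) provides $Q_{\alpha,z}(A,B)\prec_{\log}P_\alpha(A,B)$ when $\alpha>1$ and $z\ge\max\{\alpha/2,\alpha-1\}$, and again passing to weak majorization delivers the norm inequality in the reverse direction.

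There is essentially no obstacle here; the entire content of the corollary is encoded in Theorem \ref{T-1.1} and the elementary passage from log-majorization to unitarily invariant norm inequalities. The only mild care needed is to note that both $P_\alpha(A,B)$ and $Q_{\alpha,z}(A,B)$ are positive semidefinite (so that speaking of their eigenvalues in decreasing order and of log/weak majorization is meaningful), which is immediate from their defining expressions, since the bracketed middle factors are positive (semi)definite and are conjugated by $B^{1/2}$ or $B^{(1-\alpha)/2z}$ respectively before the outer power is taken.
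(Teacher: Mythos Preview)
Your proof is correct and is exactly the argument the paper intends: the corollary is stated immediately after ``Theorem \ref{T-1.1} implies the following'' with no further proof, so the only content is invoking Theorem \ref{T-1.1} together with the log-majorization $\Rightarrow$ weak majorization $\Leftrightarrow$ unitarily invariant norm inequality chain recalled in the Introduction.
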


\begin{remark}\label{R-6.2}\rm
The norm inequalities with negative power $\beta$ in \cite[Theorem 4.4]{KS} can be
rephrased as follows (by letting $\alpha=1-\beta$): When $A,B>0$, for every unitarily
invariant norm,
\begin{align*}
\|Q_{\alpha,z}(B,A)\|\le\|P_\alpha(B,A)\|\qquad
&\mbox{if $\alpha\in(1,2]$, $z\ge2$}, \\
\|P_\alpha(B,A)\|\le\|Q_{\alpha,1/2}(B,A)\|\le\|Q_{\alpha,z}(B,A)\|\qquad
&\mbox{if $\alpha\in[3/2,2]$, $0<z\le1/2$}.
\end{align*}
These inequalities are indeed included in (2) and (3) of Corollary \ref{C-6.1} (and
\eqref{F-2.1}).
\end{remark}

\begin{lemma}\label{L-6.3}
Assume that $\alpha>0$ and $\alpha\ne1$. Let $\|\cdot\|$ be a strictly increasing unitarily
invariant norm on $\bM_n$. If $\|Q_{\alpha,z}(A,B)\|=\|Q_{\alpha,z'}(A,B)\|$ for some $z,z'>0$
with $z\ne z'$, then $AB=BA$.
\end{lemma}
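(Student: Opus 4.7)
The plan is to reduce the hypothesis to an equality in Araki's log-majorization and then invoke its known equality case. Without loss of generality assume $0<z<z'$; set $p:=z/z'\in(0,1)$ and, with $C:=A^{\alpha/z}$, $D:=B^{(1-\alpha)/z}$, rewrite
\begin{align*}
Q_{\alpha,z}(A,B)&=(D^{1/2}CD^{1/2})^z,\\
Q_{\alpha,z'}(A,B)&=\bigl((D^{p/2}C^pD^{p/2})^{1/p}\bigr)^z.
\end{align*}
The instance of Araki's log-majorization underlying \eqref{F-2.1} reads $(D^{p/2}C^pD^{p/2})^{1/p}\prec_{\log}D^{1/2}CD^{1/2}$, and is preserved under the $z$-th power, so $Q_{\alpha,z'}\prec_{\log}Q_{\alpha,z}$.

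The crucial step is to upgrade ``equal norms'' to ``equal spectra''. I would invoke the following standard fact: if $X,Y\in\bM_n^+$ with $X\prec_{\log}Y$ and $\|X\|=\|Y\|$ for a strictly increasing unitarily invariant norm, then $\lambda(X)=\lambda(Y)$. A quick derivation passes to decreasing eigenvalue vectors $x,y$: since $\log x\prec\log y$, Hardy--Littlewood--P\'olya furnishes a doubly stochastic $M$ with $\log x=M\log y$; weighted AM--GM gives $x\le My$ coordinatewise, while $My\prec y$ forces $\|My\|\le\|y\|$; the sandwich $\|x\|\le\|My\|\le\|y\|=\|x\|$ together with strict increasingness collapses to $x=My$, whence the row-wise AM--GM equality $\prod_j y_j^{M_{ij}}=\sum_j M_{ij}y_j$ pins each row of $M$ to a single level set of $y$, making $x$ a permutation of $y$.

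Applying this to our log-majorization and taking $1/z$-th roots of the common eigenvalues yields the spectral equality $\lambda\bigl((D^{p/2}C^pD^{p/2})^{1/p}\bigr)=\lambda(D^{1/2}CD^{1/2})$. Raising to the $p$-th power and taking traces then gives $\Tr(C^pD^p)=\Tr((D^{1/2}CD^{1/2})^p)$, which with the substitution $E:=D^p$, $F:=C^p$, $q:=1/p\ge1$ becomes the equality case $\Tr((E^{1/2}FE^{1/2})^q)=\Tr(E^qF^q)$ of the classical Araki--Lieb--Thirring trace inequality. This equality forces $EF=FE$, and standard functional calculus on positive operators upgrades $D^pC^p=C^pD^p$ to $DC=CD$; since the exponents $\alpha/z$ and $(1-\alpha)/z$ are both nonzero, a further functional calculus step delivers $AB=BA$.

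The main obstacle is the clean citation of the equality case of the Araki--Lieb--Thirring trace inequality, as the remainder of the argument is direct log-majorization bookkeeping plus routine majorization techniques; a minor side issue is handling the case where $A$ fails to be strictly positive, which I would treat separately by restricting to the range of $A$ (or by noting that $A=0$ makes the conclusion trivial).
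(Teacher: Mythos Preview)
Your argument is essentially correct and follows the same route as the paper, which simply invokes \cite[Theorem 2.1]{Hi1}: that result is precisely the statement that equality of $\|(D^{p/2}C^pD^{p/2})^{1/p}\|$ and $\|D^{1/2}CD^{1/2}\|$ in a strictly increasing unitarily invariant norm forces $CD=DC$, so your ``main obstacle'' (the equality case of the Araki--Lieb--Thirring inequality) is exactly the reference the paper cites, and your intermediate step from norm equality to spectral equality is part of what that theorem establishes.

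One bookkeeping slip to fix: after obtaining the spectral equality $\lambda\bigl((D^{p/2}C^pD^{p/2})^{1/p}\bigr)=\lambda(D^{1/2}CD^{1/2})$, you should take traces \emph{directly} (not after raising to the $p$-th power). Taking traces gives $\Tr\bigl((D^{p/2}C^pD^{p/2})^{1/p}\bigr)=\Tr(D^{1/2}CD^{1/2})$, and \emph{this} is what your substitution $E=D^p$, $F=C^p$, $q=1/p$ turns into $\Tr((E^{1/2}FE^{1/2})^q)=\Tr(E^qF^q)$. The displayed equation $\Tr(C^pD^p)=\Tr((D^{1/2}CD^{1/2})^p)$ that you wrote (which is also true) does \emph{not} transform into that form under your substitution; it becomes $\Tr(EF)=\Tr((E^{q/2}F^qE^{q/2})^{1/q})$ instead. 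Either version is an equality case of Araki--Lieb--Thirring and yields $CD=DC$, so the conclusion is unaffected, but the text as written mismatches the two equations.
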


\begin{proof}
By \cite[Theorem 2.1]{Hi1} the assumed norm equality implies that $A^\alpha$ and $B^{1-\alpha}$
commute and hence $AB=BA$.
\end{proof}

Concerning the equality cases of the inequalities in Corollary \ref{C-6.1} we have:

\begin{prop}\label{P-6.4}
Let $\|\cdot\|$ be a strictly increasing unitarily invariant norm on $\bM_n$. Then we have
$AB=BA$ if $\|P_\alpha(A,B)\|=\|Q_{\alpha,z}(A,B)\|$ for some $\alpha,z$ satisfying one of the
following:
\begin{itemize}
\item[\rm(1)] $0<\alpha<1$ and $z>0$,
\item[\rm(2)] $\alpha>1$ and $0<z<\min\{\alpha/2,\alpha-1\}$,
\item[\rm(3)] $\alpha>1$ and $z>\max\{\alpha/2,\alpha-1\}$.
\end{itemize}
\end{prop}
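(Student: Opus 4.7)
The plan is to exploit the strict inequalities in the parameter ranges to perturb $z$ slightly, apply Araki's log-majorization \eqref{F-2.1} together with the appropriate log-majorization from Theorem \ref{T-1.1}, obtain a sandwich of norms forcing $\|Q_{\alpha,z}(A,B)\| = \|Q_{\alpha,z'}(A,B)\|$ for some $z' \ne z$, and then conclude $AB=BA$ via Lemma \ref{L-6.3}.

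More concretely, in each case I would pick a second parameter $z'$ in the interior of the appropriate log-majorization region, chosen on the side where \eqref{F-2.1} cooperates with the log-majorization of Theorem \ref{T-1.1}. In case (1), since $0<\alpha<1$ admits \emph{any} $z'>0$, I would take any $z'>z$; then $P_\alpha(A,B)\prec_{\log}Q_{\alpha,z'}(A,B)$ by Theorem \ref{T-1.1}(1), while $Q_{\alpha,z'}(A,B)\prec_{\log}Q_{\alpha,z}(A,B)$ by \eqref{F-2.1}, yielding
\[
\|P_\alpha(A,B)\|\le\|Q_{\alpha,z'}(A,B)\|\le\|Q_{\alpha,z}(A,B)\|=\|P_\alpha(A,B)\|,
\]
so $\|Q_{\alpha,z}(A,B)\|=\|Q_{\alpha,z'}(A,B)\|$. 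In case (2), since $z$ is \emph{strictly} less than $\min\{\alpha/2,\alpha-1\}$, I would choose $z<z'\le\min\{\alpha/2,\alpha-1\}$; then Theorem \ref{T-1.1}(2) gives $P_\alpha\prec_{\log}Q_{\alpha,z'}$ and \eqref{F-2.1} gives $Q_{\alpha,z'}\prec_{\log}Q_{\alpha,z}$, and the same sandwich applies. In case (3), since $z>\max\{\alpha/2,\alpha-1\}$ strictly, I would choose $\max\{\alpha/2,\alpha-1\}\le z'<z$; then \eqref{F-2.1} gives $Q_{\alpha,z}\prec_{\log}Q_{\alpha,z'}$ and Theorem \ref{T-1.1}(3) gives $Q_{\alpha,z'}\prec_{\log}P_\alpha$, leading to
\[
\|Q_{\alpha,z}(A,B)\|\le\|Q_{\alpha,z'}(A,B)\|\le\|P_\alpha(A,B)\|=\|Q_{\alpha,z}(A,B)\|.
\]

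Having obtained $\|Q_{\alpha,z}(A,B)\|=\|Q_{\alpha,z'}(A,B)\|$ with $z\ne z'$ in each case, Lemma \ref{L-6.3} immediately yields $AB=BA$.

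There is no real obstacle here: the entire content of the proof is bookkeeping about which side of $z$ the perturbation should lie on so that both log-majorizations point in compatible directions. The strict inequalities in (1)--(3) are exactly what make this perturbation possible; on the boundary one would lose the ability to move $z$ and keep both log-majorizations, which is why the proposition is stated with strict inequalities.
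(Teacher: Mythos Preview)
Your proof is correct and essentially identical to the paper's: in each case the paper also perturbs $z$ to a nearby $z'$ (with $z'>z$ in (1) and (2), $z'<z$ in (3)), uses \eqref{F-2.1} together with the relevant norm inequality from Corollary~\ref{C-6.1} (equivalently Theorem~\ref{T-1.1}) to sandwich $\|Q_{\alpha,z'}(A,B)\|$ between two copies of $\|P_\alpha(A,B)\|$, and then invokes Lemma~\ref{L-6.3}. Your closing remark about why the strict inequalities in (2) and (3) are needed is also exactly the point.
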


\begin{proof}
(1)\enspace
Assume that $\|P_\alpha(A,B)\|=\|Q_{\alpha,z}(A,B)\|$ for some $\alpha,z$ in (1). Choose
$z'>z$. By \eqref{F-2.1} and Corollary \ref{C-6.1}\,(1) we have
$$
\|P_\alpha(A,B)\|=\|Q_{\alpha,z}(A,B)\|\ge\|Q_{\alpha,z'}(A,B)\|\ge\|P_\alpha(A,B)\|,
$$
implying $AB=BA$ by Lemma \ref{L-6.3}.

(2)\enspace
Assume that $\|P_\alpha(A,B)\|=\|Q_{\alpha,z}(A,B)\|$ for some $\alpha,z$ in (2). Choose $z'$
with $z<z'<\min\{\alpha/2,\alpha-1\}$. By \eqref{F-2.1} and Corollary \ref{C-6.1}\,(2),
$$
\|P_\alpha(A,B)\|=\|Q_{\alpha,z}(A,B)\|\ge\|Q_{\alpha,z'}(A,B)\|\ge\|P_\alpha(A,B)\|,
$$
implying $AB=BA$ by Lemma \ref{L-6.3}.

(3)\enspace
Assume that $\|P_\alpha(A,B)\|=\|Q_{\alpha,z}(A,B)\|$ for some $\alpha,z$ in (3). Choose $z'$
with $z>z'>\max\{\alpha/2,\alpha-1\}$. Then $AB=BA$ follows similarly to the proof for (2).
\end{proof}

\section{Logarithmic trace inequalities}

For every $p>0$ and every $A,B\in\bM_n^+$ with $B>0$, the logarithmic trace inequalities  
\begin{align}
{1\over p}\Tr A\log(B^{-p/2}A^pB^{-p/2})
&\le\Tr A(\log A-\log B) \nonumber\\
&\le{1\over p}\Tr A\log(A^{p/2}B^{-p}A^{p/2}). \label{F-7.1}
\end{align}
were shown in \cite{HP2}, and supplementary logarithmic trace inequalities were also in
\cite{AH}. In particular, the
latter inequality for $p=1$ was first proved in \cite{HP1}, giving the comparison between the
Umegaki relative entropy and the Belavkin-Staszewski relative entropy \cite{BS} (see Remark
\ref{R-8.5} below). Recall that this can readily be verified by taking the derivatives at
$\alpha=1$ of $\Tr P_\alpha(A,B)=\Tr A(A^{1/2}B^{-1}A^{1/2})^{\alpha-1}$ and
$\Tr Q_\alpha(A,B)=\Tr A^\alpha B^{1-\alpha}$ from Corollary \ref{C-6.1}. By the derivatives
at $\alpha=2$ we have more logarithmic trace inequalities in the following:

\begin{prop}
For every $A,B\in\bM_n^+$ with $B>0$,
\begin{align}
\Tr AB^{-1}A(\log A-\log B)
&\le\Tr A^{1/2}B^{-1}A^{3/2}\log(A^{1/2}B^{-1}A^{1/2}) \nonumber\\
&=\Tr B^{-1/2}A^2B^{-1/2}\log(B^{-1/2}AB^{-1/2}) \nonumber\\
&=\Tr A^{3/2}B^{-1}A^{1/2}\log(A^{1/2}B^{-1}A^{1/2}) \nonumber\\
&\le\Tr A^2B^{-1}(\log A-\log B). \label{F-7.2}
\end{align}
\end{prop}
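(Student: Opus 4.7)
The plan is first to identify the three middle expressions of \eqref{F-7.2} as the single quantity $\Tr AC\log C$ with $C:=A^{1/2}B^{-1}A^{1/2}$, to recognize this as $f'(2):=\frac{d}{d\alpha}\Tr P_\alpha(A,B)|_{\alpha=2}$ and the right member as $g'(2):=\frac{d}{d\alpha}\Tr Q_\alpha(A,B)|_{\alpha=2}$, and then to obtain the right inequality by differentiating the log-majorization $P_\alpha\prec_{\log}Q_\alpha$ (or its reverse) around the equality point $\alpha=2$. The left inequality will need a second, analogous derivative argument applied to a different one-parameter family, followed by two classical trace inequalities.

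For the identifications, I set $\tilde C:=B^{-1/2}AB^{-1/2}$ and apply \eqref{F-2.6} with $X:=B^{-1/2}A^{1/2}$ to obtain $X\log C=\log\tilde C\cdot X$; straightforward cyclic manipulations then convert each of the three middle expressions into $\Tr AC\log C$. Since $\Tr P_\alpha(A,B)=\Tr A\,C^{\alpha-1}$ by \eqref{F-2.7}, this equals $f'(2)$; likewise $\Tr Q_\alpha(A,B)=\Tr A^\alpha B^{1-\alpha}$ gives $g'(2)=\Tr A^2B^{-1}(\log A-\log B)$. Because $P_2$ and $Q_2$ are unitarily equivalent one has $\Tr P_2=\Tr Q_2$, and Corollary~\ref{C-6.1} applied to the trace norm with $z=1$ shows that $\Tr Q_\alpha-\Tr P_\alpha$ is $\ge 0$ for $\alpha\ge 2$ and $\le 0$ for $1<\alpha\le 2$, so its derivative at the sign-change point $\alpha=2$ is non-negative, yielding the right inequality $f'(2)\le g'(2)$.

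For the left inequality I will work with the family $Q_{\alpha,\alpha-1}(A,B)=(B^{-1/2}A^{\alpha/(\alpha-1)}B^{-1/2})^{\alpha-1}$, which tracks the common boundary of the regions in Theorem~\ref{T-1.1}(2)--(3) through $(\alpha,z)=(2,1)$. At $\alpha=2$ it reduces to $B^{-1/2}A^2B^{-1/2}$, with trace $\Tr AB^{-1}A$. Using $\frac{d}{d\alpha}[\alpha/(\alpha-1)]|_{\alpha=2}=-1$ and the spectral identity $\Tr(B^{-1/2}A^2B^{-1/2})\log(B^{-1/2}A^2B^{-1/2})=\Tr AB^{-1}A\log(AB^{-1}A)$ (both matrices share eigenvalues), the derivative will work out to
$$
\frac{d}{d\alpha}\Tr Q_{\alpha,\alpha-1}(A,B)\bigg|_{\alpha=2}=\Tr AB^{-1}A\bigl[\log(AB^{-1}A)-\log A\bigr].
$$
Along this curve Theorem~\ref{T-1.1} gives $\Tr P_\alpha\le\Tr Q_{\alpha,\alpha-1}$ for $1<\alpha\le 2$ (region of part (2) at its boundary $z=\alpha-1=\min\{\alpha/2,\alpha-1\}$) and the reverse for $\alpha\ge 2$ (part (3) at its boundary $z=\alpha-1=\max\{\alpha/2,\alpha-1\}$), with equality at $\alpha=2$; the same derivative argument as above then yields $\Tr AB^{-1}A[\log(AB^{-1}A)-\log A]\le f'(2)$. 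To bridge to the Umegaki-style left member of \eqref{F-7.2}, it remains to prove $\Tr AB^{-1}A\cdot\log(AB^{-1}A)\ge\Tr AB^{-1}A\,(2\log A-\log B)$. I will apply Klein's inequality $\Tr X(\log X-\log Y)\ge\Tr(X-Y)$ to $X:=AB^{-1}A$ and $Y:=\exp(2\log A-\log B)$, and then invoke the Golden--Thompson bound $\Tr e^{2\log A-\log B}\le\Tr e^{2\log A}e^{-\log B}=\Tr A^2B^{-1}=\Tr X$, which forces the right-hand side of Klein to be non-negative.

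The hardest step is the left inequality: unlike the right, it does not arise purely from differentiating a log-majorization at its equality point, because the natural derivative along $z=\alpha-1$ produces $\log(AB^{-1}A)-\log A$ in place of the desired $\log A-\log B$. Bridging this gap requires combining Klein's inequality with Golden--Thompson, in a manner parallel to how \eqref{F-7.1} compares Umegaki-type and Belavkin--Staszewski-type relative entropies.
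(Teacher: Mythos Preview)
Your treatment of the middle equalities and of the right inequality coincides with the paper's: both compute $f'(2)=\frac{d}{d\alpha}\Tr P_\alpha(A,B)|_{\alpha=2}$ and $g'(2)=\frac{d}{d\alpha}\Tr Q_\alpha(A,B)|_{\alpha=2}$ and use the sign change of $\Tr Q_\alpha-\Tr P_\alpha$ at $\alpha=2$ (from Corollary~\ref{C-6.1} with $z=1$) to conclude $f'(2)\le g'(2)$.

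For the \emph{left} inequality your route is genuinely different from the paper's, and it does work. Your derivative computation along $z=\alpha-1$ is correct (note $\Tr A^2(\log A)B^{-1}=\Tr AB^{-1}A\log A$ by cyclicity since $A$ commutes with $\log A$), the sign change of $\Tr P_\alpha-\Tr Q_{\alpha,\alpha-1}$ at $\alpha=2$ follows from Theorem~\ref{T-1.1}\,(2)--(3) exactly as you say, and the Klein/Golden--Thompson bridge $\Tr AB^{-1}A(2\log A-\log B)\le\Tr AB^{-1}A\log(AB^{-1}A)$ is valid. The paper, however, avoids all of this: once the right inequality is established, it substitutes $C:=B^{-1/2}AB^{-1/2}$ (so $A=B^{1/2}CB^{1/2}$) and rewrites the right inequality as
\[
\Tr CBC(\log C+\log B)\le\Tr C^{1/2}BC^{3/2}\log(C^{1/2}BC^{1/2}),
\]
valid for all $C,B>0$; replacing $(C,B)$ by $(A,B^{-1})$ then yields the left inequality directly. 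Thus the paper shows the two inequalities are the \emph{same} statement up to a change of variables, needing no second derivative argument and no external inequalities (Klein, Golden--Thompson). Your approach is heavier but has the minor virtue of exhibiting the left-hand bound as arising from the boundary curve $z=\alpha-1$ of Theorem~\ref{T-1.1}, which gives some additional structural insight.
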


\begin{proof}
To prove the inequalities and the equalities above, we may assume by continuity that $A>0$ as
well as $B>0$. The inequalities in the middle of \eqref{F-7.2} are easily verified as
\begin{align*}
\Tr A^{1/2}B^{-1}A^{3/2}\log(A^{1/2}B^{-1}A^{1/2})
&=\Tr A\log(A^{1/2}B^{-1}A^{1/2})\cdot A^{1/2}B^{-1}A^{1/2} \\
&=\Tr A^{3/2}B^{-1}A^{1/2}\log(A^{1/2}B^{-1}A^{1/2}) \\
&=\Tr A^{3/2}B^{-1/2}\log(B^{-1/2}AB^{-1/2})\cdot B^{-1/2}A^{1/2} \\
&=\Tr B^{-1/2}A^2B^{-1/2}\log(B^{-1/2}AB^{-1/2}),
\end{align*}
where we have used \eqref{F-2.6} for the third equality. To prove the inequalities, we use
Corollary \ref{C-6.1}\,(2) for $z=1$ to have
$$
\Tr P_\alpha(A,B)\le\Tr Q_\alpha(A,B)\qquad\mbox{for $\alpha\ge2$}.
$$
Since $\Tr P_2(A,B)=\Tr A^2B^{-1}=\Tr Q_2(A,B)$, if follows that
\begin{equation}\label{F-7.3}
{d\over d\alpha}\,\Tr P_\alpha(A,B)\bigg|_{\alpha=2}
\le{d\over d\alpha}\,\Tr Q_\alpha(A,B)\bigg|_{\alpha=2}.
\end{equation}
The left-hand side of \eqref{F-7.3} is
\begin{align*}
&\Tr B(B^{-1/2}AB^{-1/2})^2\log(B^{-1/2}AB^{-1/2}) \\
&\qquad=\Tr B^{1/2}AB^{-1}AB^{-/2}\log(B^{-1/2}AB^{-1/2}) \\
&\qquad=\Tr B^{1/2}AB^{-1}A^{1/2}\log(A^{1/2}B^{-1}A^{1/2})\cdot A^{1/2}B^{-1/2} \\
&\qquad=\Tr A^{3/2}B^{-1}A^{1/2}\log(A^{1/2}B^{-1}A^{1/2},
\end{align*}
where we have used \eqref{F-2.6} again for the second equality. On the other hand, the
right-hand side of \eqref{F-7.3} is
$$
\Tr A^2\log A\cdot B^{-1}-\Tr A^2B^{-1}\log B=\Tr A^2B^{-1}(\log A-\log B).
$$
Hence the latter inequality in \eqref{F-7.2} follows.

Next, set $C:=B^{-1/2}AB^{-1/2}$ so that $A=B^{1/2}CB^{1/2}$. Then
$$
\Tr B^{-1/2}A^2B^{-1/2}\log(B^{-1/2}AB^{-1/2})=\Tr CBC\log C
$$
and
\begin{align*}
&\Tr A^2B^{-1}(\log A-\log B) \\
&\qquad=\Tr B^{1/2}CBCB^{-1/2}\bigl(\log(B^{1/2}CB^{1/2})-\log B\bigr) \\
&\qquad=\Tr C^{1/2}BCB^{-1/2}\log(B^{1/2}CB^{1/2})\cdot B^{1/2}C^{1/2}-\Tr CBC\log B \\
&\qquad=\Tr C^{1/2}BC^{3/2}\log(C^{1/2}BC^{1/2})-\Tr CBC\log B.
\end{align*}
by \eqref{F-2.6} once again. Hence the latter inequality in \eqref{F-7.2} is rephrased as
$$
\Tr CBC(\log C+\log B)\le\Tr C^{1/2}BC^{3/2}\log(C^{1/2}BC^{1/2}).
$$
Replacing $C,B$ with $A,B^{-1}$, respectively, we have the first inequality in \eqref{F-7.2}.
\end{proof}

\begin{remark}\rm
It is obvious that if $A,B$ are commuting, then all the inequalities of \eqref{F-7.1} and
\eqref{F-7.2} become equality. In the converse direction, it is seen from
\cite[Theorem 5.1]{AH} and \cite[Theorem 4.1]{Hi1} that the equality case of the second
inequality of \eqref{F-7.1} (for some $p>0$) implies $AB=BA$. Here we note that if equality
holds in both inequalities of \eqref{F-7.2} then $AB=BA$. Indeed, the inequality between both
ends of \eqref{F-7.2} means that
\begin{equation}\label{F-3.4}
\Tr AB^{-1}A\log B^{-1}\le\Tr A^2B^{-1}\log B^{-1},
\end{equation}
which is considered as a kind of so-called \emph{gathering inequalities} (see, e.g., \cite{Bo}
and \cite{AHO}). To prove that the equality case of \eqref{F-3.4} implies $AB=BA$, we may
assume that $B$ is diagonal, so $B^{-1}=\diag(\lambda_1,\dots,\lambda_n)$. Then for
$A=[a_{ij}]_{i,j=1}^n$, equality in \eqref{F-3.4} means that
$$
\sum_{i,j=1}^n|a_{ij}|^2\lambda_i\log\lambda_j
=\sum_{i,j=1}^n|a_{ij}|^2\lambda_i\log\lambda_i,
$$
which is rewritten as
$$
\sum_{i,j=1}^n|a_{ij}|^2(\lambda_i-\lambda_j)(\log\lambda_i-\log\lambda_j)=0.
$$
Since $(\lambda_i-\lambda_j)(\log\lambda_i-\log\lambda_j)>0$ when $\lambda_i\ne\lambda_j$, we
must have $a_{ij}=0$ for all $i,j$ with $\lambda_i\ne\lambda_j$, implying $AB=BA$.

We may naturally conjecture that if either inequality of \eqref{F-7.2} holds with equality then
$AB=BA$.

\end{remark}

\section{Applications to R\'enyi divergences}

In this section we apply our log-majorization results to the relations between R\'enyi type
divergences $D_\alpha$, $\widetilde D_\alpha$, $\widehat D_\alpha$ and $D_{\alpha,z}$
defined in the Introduction.

The equivalences (ii)\,$\iff$\,(iv) and (ii)$'$\,$\iff$\,(iv)$'$ of Theorem \ref{T-4.1}
immediately yield the following:

\begin{cor}\label{C-8.1}
Let $\alpha,z>0$ with $\alpha\ne1$.
\begin{itemize}
\item[\rm(a)] The following conditions are equivalent:
\begin{itemize}
\item[\rm(i)] $\widehat D_\alpha(A\|B)\le D_{\alpha,z}(A\|B)$ for every $A,B\in\bM_n^+$,
$n\in\bN$, with $A\ne0$ and $B>0$;
\item[\rm(ii)] $\alpha>1$ and $z\le\min\{\alpha/2,\alpha-1\}$.
\end{itemize}
\item[\rm(b)] The following conditions are equivalent:
\begin{itemize}
\item[\rm(i)$'$] $D_{\alpha,z}(A\|B)\le\widehat D_\alpha(A\|B)$ for every $A,B\in\bM_n^+$,
$n\in\bN$, with $A\ne0$ and $B>0$;
\item[\rm(ii)$'$] either $0<\alpha<1$ and $z>0$ is arbitrary, or $\alpha>1$ and
$z\ge\max\{\alpha/2,\alpha-1\}$.
\end{itemize}
\end{itemize}
\end{cor}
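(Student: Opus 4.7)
The plan is to reduce the divergence inequalities in (a) and (b) to universal trace inequalities between $P_\alpha(A,B)$ and $Q_{\alpha,z}(A,B)$, and then to read off the answer from the equivalences (ii)$\iff$(iv) and (ii)$'\iff$(iv)$'$ of Theorem \ref{T-4.1}.

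First I would observe that inside the logarithms the normalization $\Tr A>0$ is common to the definitions of $\widehat D_\alpha(A\|B)$ and $D_{\alpha,z}(A\|B)$, so comparing these divergences is the same as comparing $\log\Tr P_\alpha(A,B)$ and $\log\Tr Q_{\alpha,z}(A,B)$ after multiplication by the scalar $\frac{1}{\alpha-1}$. The sign of $\alpha-1$ is therefore decisive: for $\alpha>1$, the inequality $\widehat D_\alpha(A\|B)\le D_{\alpha,z}(A\|B)$ is equivalent to $\Tr P_\alpha(A,B)\le\Tr Q_{\alpha,z}(A,B)$, while for $0<\alpha<1$ it reverses into $\Tr Q_{\alpha,z}(A,B)\le\Tr P_\alpha(A,B)$. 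The hypothesis $A\ne0$ in the corollary matches the implicit requirement $\Tr A\ne0$ needed to form the divergences, while the trace inequalities in Theorem \ref{T-4.1} make sense without this condition, so the translation is harmless.

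For part (a) I would split on the sign of $\alpha-1$. When $\alpha>1$, the universal statement $\widehat D_\alpha\le D_{\alpha,z}$ becomes the universal trace inequality $\Tr P_\alpha\le\Tr Q_{\alpha,z}$, which by the equivalence (ii)$\iff$(iv) of Theorem \ref{T-4.1}(a), restricted to the $\alpha>1$ branch of (iv), holds iff $z\le\min\{\alpha/2,\alpha-1\}$. When $0<\alpha<1$, the sign flip turns the divergence inequality into the reverse trace inequality, which by (ii)$'\iff$(iv)$'$ of Theorem \ref{T-4.1}(b) would force $\alpha>1$, a contradiction; so no $\alpha<1$ case arises. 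Combining both cases gives exactly condition (ii) of (a).

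Part (b) is handled symmetrically. For $0<\alpha<1$, $D_{\alpha,z}\le\widehat D_\alpha$ is equivalent to $\Tr P_\alpha\le\Tr Q_{\alpha,z}$, which by Theorem \ref{T-4.1}(a)\,(iv) holds universally for every $z>0$. For $\alpha>1$, $D_{\alpha,z}\le\widehat D_\alpha$ is equivalent to $\Tr Q_{\alpha,z}\le\Tr P_\alpha$, characterized by Theorem \ref{T-4.1}(b) as $z\ge\max\{\alpha/2,\alpha-1\}$. These two cases are precisely (ii)$'$. The whole argument is essentially bookkeeping; there is no serious obstacle, since all the substantive content is already encoded in Theorem \ref{T-4.1}. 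The only point requiring care is keeping track of the sign of $\alpha-1$ when passing between the divergence and trace formulations.
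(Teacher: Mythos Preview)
Your proposal is correct and follows essentially the same approach as the paper: the paper simply notes that the equivalences (ii)$\iff$(iv) and (ii)$'\iff$(iv)$'$ of Theorem~\ref{T-4.1} immediately yield the corollary, and your argument spells out exactly this reduction, correctly tracking the sign of $\alpha-1$ when passing from the divergence inequalities to the trace inequalities.
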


Moreover, specializing Corollary \ref{C-6.1} to $z=1,\alpha$ and the trace-norm, we have:

\begin{cor}\label{C-8.2}
Let $A,B\in\bM_n^+$ with $A\ne0$ and $B>0$. If $0<\alpha\le2$ and $\alpha\ne1$ then
\begin{align}\label{F-8.1}
\widetilde D_\alpha(A\|B)\le D_\alpha(A\|B)\le\widehat D_\alpha(A\|B),
\end{align}
and if $\alpha\ge2$ then
$$
\widetilde D_\alpha(A\|B)\le\widehat D_\alpha(A\|B)\le D_\alpha(A\|B).
$$
\end{cor}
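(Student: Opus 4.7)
The plan is to specialize the general comparison of Corollary \ref{C-8.1} to the two distinguished values $z=1$ (which recovers $D_\alpha=D_{\alpha,1}$) and $z=\alpha$ (which recovers $\widetilde D_\alpha=D_{\alpha,\alpha}$), and to combine this with the well-known inequality $\widetilde D_\alpha\le D_\alpha$ recalled in the Introduction.

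First I will compare $D_\alpha$ with $\widehat D_\alpha$ by locating $z=1$ relative to the thresholds in Corollary \ref{C-8.1}. When $\alpha\ge2$ one has $\min\{\alpha/2,\alpha-1\}\ge1$, so $z=1$ satisfies condition~(ii) of Corollary \ref{C-8.1}(a), giving $\widehat D_\alpha(A\|B)\le D_\alpha(A\|B)$. When $0<\alpha<1$, condition~(ii)$'$ of Corollary \ref{C-8.1}(b) holds for arbitrary $z>0$, and when $1<\alpha\le2$ one has $\max\{\alpha/2,\alpha-1\}\le1$ so that $z=1$ again meets~(ii)$'$; in both subcases this yields $D_\alpha(A\|B)\le\widehat D_\alpha(A\|B)$. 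These steps furnish the second inequality in each of the two displayed chains.

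For the chain with $\alpha\ge2$ I still need $\widetilde D_\alpha\le\widehat D_\alpha$. Since $\alpha\ge\max\{\alpha/2,\alpha-1\}$ is trivial, the choice $z=\alpha$ satisfies condition~(ii)$'$ of Corollary \ref{C-8.1}(b), giving $\widetilde D_\alpha(A\|B)=D_{\alpha,\alpha}(A\|B)\le\widehat D_\alpha(A\|B)$. For the chain with $0<\alpha\le2$, $\alpha\ne1$, I will invoke $\widetilde D_\alpha\le D_\alpha$ as recalled in the Introduction: Araki's log-majorization \eqref{F-2.1} compares $Q_{\alpha,1}$ and $Q_{\alpha,\alpha}$, and taking traces and dividing by $\alpha-1$ delivers the inequality in both subcases $0<\alpha<1$ and $1<\alpha\le2$ once the sign flip is tracked. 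No genuine obstacle arises; the only care needed is to place each chosen $z\in\{1,\alpha\}$ correctly with respect to the boundary values in Corollary \ref{C-8.1} and to keep track of the sign of $\alpha-1$ when converting trace inequalities into divergence inequalities.
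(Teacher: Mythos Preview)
Your proof is correct and essentially the same as the paper's: the paper derives the corollary by specializing Corollary~\ref{C-6.1} to the trace norm with $z=1,\alpha$ (together with the well-known Araki inequality $\widetilde D_\alpha\le D_\alpha$), while you invoke the equivalent divergence-level restatement in Corollary~\ref{C-8.1} with the same two choices of $z$ and the same appeal to \eqref{F-2.1}. The only difference is which corollary is cited, and both rest on the identical log-majorization comparisons between $P_\alpha$ and $Q_{\alpha,z}$.
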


\begin{cor}\label{C-8.3}
Let $A,B\in\bM_n^+$ with $A\ne0$ and $B>0$. If some two of $D_\alpha(A\|B)$,
$\widetilde D_\alpha(A\|B)$, and $\widehat D_\alpha(A\|B)$ are equal for some
$\alpha\in(0,\infty)\setminus\{1,2\}$, then $AB=BA$.
\end{cor}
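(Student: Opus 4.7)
The plan is to split into the three possible equalities among $D_\alpha$, $\widetilde D_\alpha=D_{\alpha,\alpha}$, $\widehat D_\alpha$ and, in each case, turn the equality of divergences into a trace (equivalently, Schatten $1$-norm) equality of the underlying matrices, then invoke Lemma \ref{L-6.3} or Proposition \ref{P-6.4} with $\|\cdot\|=\|\cdot\|_1$ (which is strictly increasing). Because $\alpha\ne1$ and $\log$ is injective, equality of two divergences in the list is equivalent to equality of the two corresponding traces $\Tr P_\alpha(A,B)$, $\Tr Q_\alpha(A,B)=\Tr Q_{\alpha,1}(A,B)$, or $\Tr\widetilde Q_\alpha(A,B)=\Tr Q_{\alpha,\alpha}(A,B)$.

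First I would handle the easy case $D_\alpha(A\|B)=\widetilde D_\alpha(A\|B)$. This translates to $\|Q_{\alpha,1}(A,B)\|_1=\|Q_{\alpha,\alpha}(A,B)\|_1$ with $1\ne\alpha$, and Lemma \ref{L-6.3} yields $AB=BA$ immediately; the values $\alpha=2$ or $0<\alpha<1$ play no special role here.

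Next I would do $\widehat D_\alpha(A\|B)=\widetilde D_\alpha(A\|B)$, where $z=\alpha$. For $0<\alpha<1$ this is case (1) of Proposition \ref{P-6.4}. For $\alpha>1$ one just checks $z=\alpha>\max\{\alpha/2,\alpha-1\}$ (strict, since $\alpha>\alpha/2$ and $\alpha>\alpha-1$), so case (3) of Proposition \ref{P-6.4} applies and gives $AB=BA$ with no exclusion of $\alpha=2$.

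The remaining and slightly more delicate case is $\widehat D_\alpha(A\|B)=D_\alpha(A\|B)$, i.e.\ $z=1$. For $0<\alpha<1$, Proposition \ref{P-6.4}(1) applies. For $\alpha>1$ I would split at $\alpha=2$: when $1<\alpha<2$ the inequality $1>\max\{\alpha/2,\alpha-1\}$ is strict, so Proposition \ref{P-6.4}(3) applies; when $\alpha>2$ the inequality $1<\min\{\alpha/2,\alpha-1\}$ is strict, so Proposition \ref{P-6.4}(2) applies. In either subcase we conclude $AB=BA$. This explains exactly why the corollary excludes $\alpha=2$: at $\alpha=2$ we have the identity $P_2(A,B)=AB^{-1}A$, which is unitarily equivalent to $Q_2(A,B)=B^{-1/2}A^2B^{-1/2}$, so $\widehat D_2(A\|B)=D_2(A\|B)$ holds trivially without any commutation of $A$ and $B$. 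The main obstacle is not any computation but ensuring that the strict inequalities required by Proposition \ref{P-6.4} are satisfied; once the two subcases $1<\alpha<2$ and $\alpha>2$ are separated, everything falls out directly from Lemma \ref{L-6.3} and Proposition \ref{P-6.4}.
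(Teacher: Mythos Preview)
Your proposal is correct and follows essentially the same approach as the paper: translate each divergence equality into a trace-norm equality and then apply Lemma \ref{L-6.3} for $D_\alpha=\widetilde D_\alpha$ and the appropriate case of Proposition \ref{P-6.4} for the two equalities involving $\widehat D_\alpha$, splitting $\alpha>1$ at $\alpha=2$. Your write-up is simply more explicit than the paper's (in particular you spell out the $0<\alpha<1$ case via Proposition \ref{P-6.4}(1) and explain why $\alpha=2$ must be excluded), but the underlying argument is identical.
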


\begin{proof}
Note that $D_\alpha=\widehat D_\alpha$ means $\Tr Q_{\alpha,1}=\Tr P_\alpha$, and
$\widetilde D_\alpha=\widehat D_\alpha$ means $\Tr Q_{\alpha,\alpha}=\Tr P_\alpha$. Also,
note that if $1<\alpha<2$ then $\alpha>1>\max\{\alpha/2,\alpha-1\}$, and if $\alpha>2$ then
$1<\min\{\alpha/2,\alpha-1\}$ and $\alpha>\max\{\alpha/2,\alpha-1\}$. Hence by Proposition
\ref{P-6.4}, either equality of $D_\alpha(A\|B)=\widehat D_\alpha(A\|B)$ or
$\widetilde D_\alpha(A\|B)=\widehat D_\alpha(A\|B)$ implies $AB=BA$. Furthermore,
$D_\alpha(A\|B)=\widetilde D_\alpha(A\|B)$ implies $AB=BA$ by Lemma \ref{L-6.3}.
\end{proof}

\begin{remark}\label{R-8.4}\rm
In \cite{HM} we studied the \emph{standard $f$-divergence} $S_f(A\|B)$ and the \emph{maximal
$f$-divergence} $\widehat S_f(A\|B)$, which are defined as
\begin{align*}
S_f(A\|B)&:=\Tr B^{1/2}f(L_AR_{B^{-1}})(B^{1/2}), \\
\widehat S_f(A\|B)&:=\Tr P_f(A,B)
\end{align*}
for $A,B\in\bM_n^+$, $A,B>0$ (and extended to general $A,B\in\bM_n^+$ by convergences), where
$L_A$ is the left multiplication on $\bM_n$ by $A$ and $R_{B^{-1}}$ is the right
multiplication by $B^{-1}$. It is known \cite[Proposition 4.1]{HM} (see also \cite{Ma}) that
$$
S_f(A\|B)\le\widehat S_f(A\|B)
$$
holds whenever $f$ is an operator convex function on $(0,\infty)$. When $f(x)=-x^\alpha$ for
$0<\alpha<1$ or $f(x)=x^\alpha$ for $1<\alpha\le2$, this becomes the second inequality of
\eqref{F-8.1}. Corollaries \ref{C-8.2} and \ref{C-8.3} say that this is no longer true if $f$
is a general convex function on $(0,\infty)$. Furthermore, a special case of
\cite[Theorem 4.3]{HM} says that $D_\alpha(A\|B)=\widehat D_\alpha(A\|B)$ for some
$\alpha\in(0,2)\setminus\{1\}$ implies $AB=BA$, which is included in Corollary \ref{C-8.3}.
\end{remark}

\begin{remark}\label{R-8.5}\rm
Let $A,B\in\bM_n^+$ with $A\ne0$ and $B>0$ as above. It is well-known (and readily verified)
that
$$
\lim_{\alpha\to1}D_\alpha(A\|B)=D_1(A\|B):={D(A\|B)\over\Tr A},
$$
where $D(A\|B):=\Tr A(\log A-\log B)$, the \emph{Umegaki relative entropy}. It is also known
\cite{MDSFT} that
$$
\lim_{\alpha\to1}\widetilde D_\alpha(A\|B)=D_1(A\|B).
$$
On the other hand, we note that
$$
\lim_{\alpha\to1}\widehat D_\alpha(A\|B)
={1\over\Tr A}\,\Tr B^{1/2}AB^{-1/2}\log(B^{-1/2}AB^{-1/2})={D_\BS(A\|B)\over\Tr A},
$$
where $D_\BS(A\|B):=\Tr A\log(A^{1/2}B^{-1}A^{1/2})$, the \emph{Belavkin-Staszewski relative
entropy} \cite{BS} (see also \cite[Example 4.4]{HM}). By Corollary \ref{C-8.2} we have
$D(A\|B)\le D_\BS(A\|B)$, which was first obtained in \cite{HP1}.
\end{remark}

\subsection*{Acknowledgments}

This work was supported by JSPS KAKENHI Grant Number JP17K05266.

\end{document}